\numberwithin{equation}{section}
\newtheorem{thm}{Theorem}[section]
\newtheorem{prop}[thm]{Proposition}
\newtheorem{lem}[thm]{Lemma}
\newtheorem{rem}[thm]{Remark}
\newtheorem{exam}[thm]{Example}
\def\Mod{{\mathrm {Mod}}}
\def\ga{\Gamma}
\def\R{\mathbb R}
\def\Q{\mathbb Q}
\def\Z{\mathbb Z}
\def\H{\mathbb H}
\def\T{\mathcal T}
\title{Well-rounded equivariant deformation retracts of  Teichm\"uller spaces}
\author{Lizhen Ji\thanks{Partially Supported by NSF grant DMS-1104696} 
\\ Department of Mathematics\\ University of Michigan\\ Ann Arbor, MI 48109}
\date{September 14, 2013}
\begin{document}

\maketitle

\begin{abstract}
In this paper, we construct spines, i.e.,  $\Mod_g$-equivariant
deformation retracts,  of the Teichm\"uller space $\T_g$ of compact Riemann surfaces of genus $g$.
Specifically, we define a $\Mod_g$-stable
subspace $S$ of positive codimension and construct an intrinsic
$\Mod_g$-equivariant deformation retraction from $\T_g$ to $S$.
As an essential part of the proof, we construct a canonical $\Mod_g$-deformation
 retraction of the Teichm\"uller space $\T_g$ to its thick
part $\T_g(\varepsilon)$ when $\varepsilon$ is sufficiently small.
These equivariant deformation retracts of $\T_g$ give
 cocompact models of the universal space  $\underline{E}\Mod_g$ 
for proper actions of the mapping class group $\Mod_g$. 
These deformation retractions of $\T_g$ 
are motivated by the well-rounded deformation retraction
of the space of lattices in $\R^n$. 
We also include a summary  
of results and  difficulties of an unpublished paper of Thurston on a potential spine of the Teichm\"uller space.
\end{abstract}


\section{Introduction}

Let $S_g$ be a compact oriented surface of genus $g$,
and $\Mod_g$ be the mapping class group of $S_g$.
Let $\T_g$ be the Teichm\"uller space of marked complex structures
on $S_g$.  
When $g=1$, $\T_g$ can be identified with the upper half plane $\mathbb H^2$ and $\Mod_g=\mathrm{SL}(2, \Z)$.

We will assume that $g\geq 2$ in the following.
Then every compact Riemann surface of genus $g$ admits a canonical hyperbolic metric,
and hence $\T_g$ is also the moduli space of marked hyperbolic metrics on $S_g$.

It is known that $\T_g$ is a complex manifold diffeomorphic to
$\R^{6g-6}$ and $\Mod_g$ acts holomorphically and properly on $\T_g$.
It is also known that $\Mod_g$ contains torsion elements and does not act fixed point freely on 
$\T_g$. 
By using the geodesic convexity of the Weil-Petersson metric
of $\T_g$ \cite{wo1} (or earthquakes in $\T_g$) and positive solutions of the Nielsen  
realization problem \cite{ke} \cite{wo1}, it can be shown \cite[Proposition 2.3]{jw} that $\T_g$ is a model of the 
universal space $\underline{E}\Mod_g$ of proper actions of $\Mod_g$, which means that 
for every finite subgroup $F\subset \Mod_g$, the set of fixed points $(\T_g)^F$ is nonempty
and contractible.

On the other hand, it is well-known that the quotient $\Mod_g\backslash \T_g$ is 
the moduli space of compact Riemann surfaces of genus $g$ and is non-compact.
For many applications, an important and natural problem is to
find a  model of the universal space $\underline{E}\ga$ for $\ga=\Mod_g$ 
which is $\ga$-cocompact,
i.e., the quotient $\ga\backslash \underline{E}\ga$ is  compact, or rather more
to the point, is a finite $CW$-complex. 
Another closely related problem is to find a model of $\underline{E}\ga$
which is of as small dimension as possible, for example, equal to
the virtual cohomological dimension of $\ga$.

For any $n\geq 1$, let $S_{g,n}$ be the surface obtained from $S_g$ by removing $n$ points,
and $\T_{g,n}$ be the corresponding Teichm\"uller space of $S_{g,n}$ and $\Mod_{g,n}$ the
corresponding mapping class group.
Then $\T_{g,n}$ is also a model for the universal space $\underline{E}\Mod_{g,n}$
for proper actions of $\Mod_{g,n}$. It was shown in \cite{boe} \cite{ha} \cite{pe2} that $\T_{g,n}$
admits the structure of $\Mod_{g,n}$-simplicial complex,  and hence admits
an equivariant deformation retraction to a subspace which is cofinite 
$\Mod_{g,n}$-CW-complex of dimension equal to the virtual cohomological dimension of $\Mod_{g,n}$.
This is a model of $\underline{E}\Mod_{g,n}$ of the smallest possible dimension.
This result was used by Kontsevich  \cite{ko} in proving a conjecture of Witten on intersection
theory of the moduli space $\mathcal M_{g,n}$.
The method for constructing the above spine of $\T_{g,n}$
depends crucially on the assumption that $n\geq 1$ and cannot be applied to
$\T_g$. 

Briefly, the important role played by the punctures  in triangulating the Teichm\"uller
space $\T_{g,n}$ can be explained as follows. As in \cite[Chapter 2]{ha1},  we assume that $n=1$ for simplicity.
Let $*$ be a  fixed basepoint in $S$. Then essential simple closed curves in $S_g$ passing through $*$ define a simplicial
complex $A$, called the arc-complex,   where each simplex corresponds to an arc-system of $S_g$ based at $*$,
which is a collection of essential  simple closed curves nonhomotopic to each other and intersecting only
at $*$. Let $A_\infty$ be the subcomplex consisting of
simplexes whose arc-systems do not not fill $S_g$. The basic result is that there is a canonical homeomorphism
between $\T_{g,1}$ and $A-A_\infty$. One way to see this is that for each marked Riemann surface
$(\Sigma_g, p)$ with $p$ corresponding to the basepoint $*$ in $S_g$, there is a unique, up to  multiplication by positive constants,
 horocyclic holomorphic quadratic differential 
on $\Sigma_g$ with its pole of order 2 at $p$. The foliations defined the quadratic differential will produce a filling
 arc-system together with related
weights so that they define a canonical point in $A-A_\infty$ (or rather a point in the simplex determined by the arc-system).
The second way to see this is to use the hyperbolic metric on the punctured Riemann surface $\Sigma_g-\{p\}$. 
Then a suitably defined distance of points of $\Sigma_g-\{p\}$ 
to the ideal point at infinity $p$ of $\Sigma_g-\{p\}$ defines a
spine of $\Sigma_g-\{p\}$, which also allows one to define a filling arc-system and related weights, and hence  to map
$(\Sigma_g, p)$ to a point in  $A-A_\infty$.

Once $\T_{g,1}$ is identified with $A-A_\infty$, the first barycentric subdivision of $A$ gives an equivariant spine
of $A-A_\infty$ of the optimal dimension, which gives a spine of $\T_{g,1}$ of the optimal dimension. See Remark 
\ref{explicit-spine} for more details.

As mentioned above, when $g=1$,  the Teichm\"uller space $\T_1=\mathbb H^2$, and $\Mod_1=\mathrm{SL}(2, \Z)$.
An equivariant deformation retract, i.e. a spine,  of $\mathbb H^2$ is known. 
In fact, this was used in \cite[Chapter 2]{ha1} to motivate the
above construction of the spine in $\T_{g,1}$. We will also give a construction of the spine of $\mathbb H^2$
using the identification $\mathbb H^2=\mathrm{SL}(2, \R)/\mathrm{SO}(2)$ in Remark \ref{well-rounded} below.

For the above problem to construct $\Mod_g$-cocompact universal spaces $\underline{E}\Mod_g$,
 there are two approaches 
based on the action of $\Mod_g$ on $\T_g$:
either construct a  partial compactification  $\overline{\T_g}$
such that the inclusion $\T_g\to \overline{\T_g}$
is a $\Mod_g$-equivariant homotopy equivalence, or construct a $\Mod_g$-stable subspace
$S$ such that $\Mod_g\backslash S$ is compact
and  there exists a $\Mod_g$-equivariant deformation retraction
from $\T_g$ to $S$.
The second approach seems to be more accessible and might give spaces of smaller
dimension than $\T_g$.

In a preprint \cite{th} circulated in 1985, Thurston proposed a candidate of
$\Mod_g$-equivariant deformation retract,  i.e., a spine,  of $\T_g$, of positive  codimension.
An outline was given to deform $\T_g$
into a small neighborhood of the proposed subspace.
But the deformation retraction to the proposed subspace does not necessarily achieve its goal.
See Remark \ref{thurston} below for a summary of results in \cite{th}, discussions of the difficulties, 
and an alternative proof of one key result in \cite{th}. 

It is known \cite{ha} that the virtual cohomological dimension of $\Mod_g$ is
$4g-5$. An important problem is whether there exists
a $\Mod_g$-stable subspace of $\T_g$ which is of dimension $4g-5$ and
is a $\Mod_g$-equivariant deformation retract of $\T_g$.
The question whether such a deformation retract of $\T_g$ exists or not is
Question 1.1 in \cite{bv}.  

In this paper, we consider two spines of $\T_g$. 
The first one is the thick part  $\T_g(\varepsilon)$ of $\T_g$,
i.e., for any $\varepsilon>0$ which is sufficiently small,  $\T_g(\varepsilon)$ consists of hyperbolic surfaces which do not contain geodesics with  length less than $\varepsilon$. 
The second subspace $S$ consists of hyperbolic surfaces whose systoles, i.e., 
the shortest simple
closed geodesics, contain at least an intersecting pair. (See Theorem \ref{spine} in
\S 4 for more detail).
An important point about the second spine $S$ is that it is an {\em intrinsically} defined subspace
of {\em positive} codimension. 

It is known that $\T_g(\varepsilon)$ is a real analytic submanifold with corners
and is stable under  the action of $\Mod_g$ with compact  quotient.

Existence of   a $\Mod_g$-equivariant
deformation retraction of $\T_g$  to $\T_g(\varepsilon)$
was proved in \cite[Theorems 1.2 and 1.3]{jw}.
Therefore,  $\T_g(\varepsilon)$ is a $\Mod_g$-cocompact 
$\underline{E}\ga$ space for $\ga=\Mod_g$.

On the other hand, the deformation retraction of $\T_g$  to $\T_g(\varepsilon)$ 
in \cite[\S 3]{jw} is the flow
associated with a vector field which is patched up from local vector
fields, which increase any fixed collection of short geodesics simultaneously,
 using a partition of unity. In order to get an equivariant deformation,
the construction of the partition of unity is delicate.
Since there is no intrinsic or canonical partition of unity,
the deformation retraction is not unique or  canonical.

A natural problem is to construct a deformation retraction
of $\T_g$ to $\T_g(\varepsilon)$ which depends only on the intrinsic
geometry of  the hyperbolic surfaces in $\T_g$ and the geometry of $\T_g$.
An answer is given in Theorem \ref{can-def} below.
Due to the intrinsic nature of the construction, 
it is automatically $\Mod_g$-equivariant.
Since the construction is motivated and similar to the well-rounded deformation retraction 
for the space
of lattices in $\R^n$ \cite{as1}, which is  explained in Remark \ref{well-rounded} below,
 we also call it  the {\em well-rounded deformation retraction}
of the Teichm\"uller space $\T_g$
in the title. 

The continuation of the deformation retraction to $\T_g(\varepsilon)$ gives rise
to a deformation retraction to the second spine $S$. It is a real
sub-analytic subspace of $\T_g$ 
of codimension at least 1.   See Theorem \ref{spine} below for a precise
statement.

It seems that this spine $S$
 is the {\em first example} of  equivariant deformation retract of $\T_g$
which is of {\em positive codimension}. 
A natural problem is whether this idea can possibly
be generalized to construct equivariant deformation retracts of  $\T_g$
which are of higher codimension. This will
depend on understanding subspaces of $\T_g$ consisting
of hyperbolic surfaces whose systoles intersect.
(See \cite{sch} for a survey of some work on
systoles of surfaces.)
In Proposition \ref{higher}, we explain how to obtain a   spine of $\T_g$
of codimension at least 2.

Another natural problem is to find good candidates of spines of $\T_g$ which
are of the optimal dimension $4g-5$.  It is reasonable to believe
that spines of $\T_g$ should consist of ``rounded hyperbolic surfaces",  
and hyperbolic surfaces
in the spine $S$ in Theorem \ref{spine} are in some sense the {\em least} rounded among
all possible definitions of ``rounded hyperbolic surfaces". One idea is to require hyperbolic surfaces to be
cut into smaller pieces by some systoles such that the pieces are ``rounded". 
Once good candidates are found, deformation retractions to them can be difficult.

\vspace{.1in}
\noindent {\em Acknowledgments.} I would like to thank Scott Wolpert for very helpful conversations,
references and encouragement and Hugo Parlier for helpful correspondence,
for example, the arguments in the proof of Proposition \ref{higher} are due to them,
and Juan Souto and Alexandra Pettet for the example on a flow on the unit disk
that explains a  problem with the spine in \cite{th}.
I would also like to thank an anonymous referee for constructive suggestions which have improved the
exposition of this paper. 

\section{Definition of spines and examples}

Let $X$ be a topological space, and $\ga$ a discrete group acting properly on $X$. 
A subset $S$ of $X$ is called an {\em equivariant  spine} or simply a {\em spine}
if \begin{enumerate}
\item $S$ is stable under $\ga$,
\item and there exists a $\ga$-equivariant deformation retraction from $X$ to $S$.
\end{enumerate}

For example, if $\ga$ is a cofinite, nonuniform Fuchsian group, and $X=\H^2$
is the Poincar\'e upper half-space, then $\H^2$ has an equivariant spine given by a tree.
The best known example is $\ga=SL(2, \Z)$, which is equal to $\Mod_g$ when $g=1$.
See \cite{boe}. 
(We note that when $\ga$ is a torsion-free non-uniform Fuchsian group, 
then the fundamental group $\pi_1(\ga\backslash \H^2)$ is a free group.)


\begin{prop}
If $X$ is contractible, then any spine $S$ of $X$ is also contractible.
If $X$ is a universal space for proper actions of $\ga$, then $S$ is also a universal space
for proper actions of $\ga$.
\end{prop}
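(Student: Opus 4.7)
The plan is to extract both statements from a single observation: a $\ga$-equivariant strong deformation retraction $r_t \colon X \to X$ (with $r_0 = \mathrm{id}_X$ and $r_1(X) \subseteq S$, pointwise fixing $S$) restricts to an ordinary deformation retraction on every fixed-point set $X^F$ for $F \subset \ga$, yielding a homotopy equivalence between $X^F$ and $S^F$.

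For the first assertion, I would simply note that a deformation retraction $r_t$ from $X$ to $S$ exhibits $S$ as a homotopy equivalent subspace of $X$. Hence if $X$ is contractible, so is $S$. (No equivariance is needed here; take $F$ to be trivial in the argument below.)

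For the second assertion, let $F$ be any finite subgroup of $\ga$ and consider the fixed-point set $X^F$. The key step is to check that $r_t$ preserves $X^F$: for $x \in X^F$ and $f \in F$, equivariance gives $f \cdot r_t(x) = r_t(f \cdot x) = r_t(x)$, so $r_t(x) \in X^F$ for all $t$. Thus $r_t$ restricts to a deformation retraction of $X^F$ onto $r_1(X^F) \subseteq S \cap X^F = S^F$, and in particular onto $S^F$ itself (since points of $S^F$ are fixed throughout). Because $X^F$ is nonempty and contractible by hypothesis, $S^F$ is nonempty (it contains $r_1(X^F)$) and, being a deformation retract of a contractible space, is itself contractible. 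This verifies the defining property of $\underline{E}\ga$ for $S$.

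I do not expect a serious obstacle here; the one point requiring a moment of care is the verification that the $\ga$-equivariant homotopy preserves each fixed-point set $X^F$, which is an immediate consequence of equivariance. Everything else is a formal consequence of the definitions of deformation retraction and of $\underline{E}\ga$.
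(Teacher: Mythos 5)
Your proof is essentially identical to the paper's, which simply notes that the $\ga$-equivariant deformation retraction restricts to a deformation retraction of each fixed-point set $X^F$ onto $S^F$, so $S^F$ inherits nonemptiness and contractibility from $X^F$. You fill in the routine verification that equivariance forces $r_t$ to preserve $X^F$, which the paper leaves implicit; the substance is the same.
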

\begin{proof}
We only need to note that for any finite subgroup $F$ of $\ga$, the fixed set
$S^F$ is a deformation retract of the fixed point set $X^F$ in $X$ and hence
is nonempty and contractible. 
\end{proof}

In the following we assume that $X$ is a universal space for proper actions of $\ga$.
Then $S$ is called a {\em minimal (or optimal) spine} if $\dim S=\text{vcd\ } \ga$,
where vcd$\ \ga$ is the virtual cohomological dimension of $\ga$.
The reason is that since $S$ is also a universal space for proper actions
of $\ga$, $\dim S\geq \text{vcd\ } \ga$.

The spine $S$ of $X$ is called a {\em cofinite spine} if $S$ is a $\ga$-CW complex and
the quotient $\ga\backslash S$ is a finite CW-complex.
$S$ is called a {\em cocompact spine} if $\ga\backslash S$ is a compact space.
We note that if $S$ is cofinite, then it is cocompact.
On the other hand, the converse is not automatically true,
since a general $\ga$-space may not admit the structure of
a $\ga$-CW-complex.


It is known that given any discrete group  $\ga$, there always
exists a universal space $E\ga$ for proper and fixed point free actions of $\Gamma$,
and a universal space $\underline{E}\ga$ for proper actions of $\ga$.
Both $E\ga$ and $\underline{E}\ga$
are unique up to $\ga$-equivariant homotopy equivalence
 (see \cite{lu} and references there). 
The quotient $\ga\backslash E\ga$ is a classifying space $B\ga$ of $\ga$,
i.e., $\pi_1(B\ga)=\ga$, and $\pi_i(B\ga)=\{1\}$ for $i\geq 2$.
When $\ga$ is torsion-free, then $\underline{E}\ga$ is equal to $E\ga$.
 
Models of $E\ga$  can be constructed
for groups by a general method due to Milnor  via the infinite join of copies of $\ga$ \cite{mil}
and are infinite dimensional. 
A generalization gives a construction of a model of $\underline{E}\ga$ via the infinite join of
copies of cosets $\ga/F$, where $F$ ranges over finite subgroups of $\ga$,
and such a model is also infinite dimensional. (See \cite[\S 3]{lu} and  \cite[Lemma 6.11, Chap. I]{tom}
for additional references and more details. The basic reason is that joining produces highly connected spaces,
and $\ga$ acts on these models by multiplication on the vertices,  and hence the action satisfies
the desired stabilizer property.)
But good models of $E\ga$ and $\underline{E}\ga$, in particular
those having various finiteness properties, are important in order to understand
finiteness properties of $\ga$ such as finite generation,
finite presentation and cohomological finiteness properties of $\ga$,
and also for proofs of the Novikov conjectures and
the Baum-Connes conjecture for $\ga$.
See \cite[\S 2]{jw} for an explanation of some applications. 

For some basic groups such that arithmetic subgroups (or more general discrete subgroups)
of Lie groups and mapping class groups, there are natural  finite dimensional 
$\underline{E}\ga$-spaces.
 For the former groups, they are the symmetric spaces or more general contractible
homogeneous spaces associated with the Lie groups, and for the latter groups,
they are given by the Teichm\"uller spaces.

But such natural spaces are often not  $\ga$-cofinite, or even $\ga$-cocompact,
$\underline{E}\ga$-spaces,
as pointed out in the introduction, and an important problem is to
find good equivariant spines contained in them in order to construct cofinite models of $\underline{E}\ga$-spaces of dimension as small as possible.

\begin{exam}\label{explicit-spine}
{\em 
Suppose $X$ is a simplicial complex with some faces of some simplexes  missing.
Let $X^*$ be the completion of $X$, i.e, if an open simplex is contained in $X^*$,
then all its simplicial faces are also contained in $X^*$.
Suppose $X\neq X^*$. Then there is a canonical spine of $X$ obtained as follows.
Take the maximal full subcomplex of the barycentric subdivision of $X^*$ that
are disjoint from $X^*-X$, i.e., from the missing faces of $X$. 
This is the spine constructed in \cite{as2} for the space of positive definite quadratic forms
in $n$ variables (or equivalently the space of lattices in $\R^n$), the Teichm\"uller
space $\T_{g,n}$ of Riemann surfaces of genus $g$ with $n$-punctures when $n>0$ in \cite{ha},
and the outer space  associated with the outer automorphism groups $Out(F_n)$
of the free groups in \cite{cv}. 

On the other hand, if $X$ does not have a structure of $\ga$-simplicial
complex, it is often less clear how to construct a spine or whether a spine of positive
codimension exists.
}
\end{exam}

\section{Deformation retraction of the Teichm\"uller space to the thick part and well-rounded lattices}

Let $S_g$ be a compact oriented surface of genus $g\geq 2$. 
A marked compact hyperbolic surface of genus $g$ 
 is a hyperbolic surface $\Sigma_g$ together with a homotopy
equivalence class $[\varphi]$  of diffeomorphisms $\varphi: \Sigma_g\to S_g$.
Two marked hyperbolic surfaces $(\Sigma_{g,1}, [\varphi_1]), (\Sigma_{g,2}, [\varphi_2])$
are defined to be {\em equivalent} if there exists an  isometry
$h:\Sigma_{g,1}\to \Sigma_{g,2}$
such that $[\varphi_1]=[\varphi_2\circ h]: \Sigma_{g,1}\to S_g$.
Then the Teichm\"uller space  $\T_g$ is the set of equivalence classes of marked compact hyperbolic
surfaces of genus $g$:
$$\T_g=\{(\Sigma_g, [\varphi])\}/\sim.$$
Let $\text{Diff}^+(S_g)$ be the group of orientation preserving diffeomorphisms
of $S_g$, and $\text{Diff}^0(S_g)$ the identity component of $\text{Diff}^+(S_g)$.
Then the quotient  $\text{Diff}^+(S_g)/\text{Diff}^0(S_g)$ is the {\em mapping class group}  $\Mod_d$,
and $\Mod_g$ acts on $\T_g$ by changing the markings of the marked hyperbolic
surfaces.

By the collar theorem of hyperbolic surfaces, there exists a positive constant $\varepsilon_0$ such that
for any compact hyperbolic surface $\Sigma_g$ and any two closed geodesics $\gamma_1, \gamma_2
$ in it, if 
$$\ell(\gamma_1), \ell(\gamma_2)\leq \varepsilon_0, $$
then $$\gamma_1\cap \gamma_2=\emptyset.$$

For any $\varepsilon$ with $0< \varepsilon \leq \varepsilon_0$, define the {\em 
$\varepsilon$-thick part} $\T_g(\varepsilon)$
by
$$\T_g(\varepsilon)=\{(\Sigma_g, [\varphi])\mid \text{for all simple closed geodesic $\gamma$ in
$\Sigma_g$}, \ \ell(\gamma)\geq \varepsilon\}.$$ 

Then the following result is known.

\begin{prop}\label{truncated-subspace}
The subspace  $\T_g(\varepsilon)$ is stable under $\Mod_g$ with a compact quotient $\Mod_g\backslash
\T_g(\varepsilon)$. 
Under the above assumption that $0< \varepsilon\leq \varepsilon_0$, 
$\T_g(\varepsilon)$ is a real analytic manifold with corners and hence admits a
$\Mod_g$-equivariant triangulation such that $\Mod_g\backslash \T_g(\varepsilon)$
is a finite CW-complex.
\end{prop}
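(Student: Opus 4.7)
First I would dispose of the $\Mod_g$-stability, which is essentially tautological. The action of a mapping class $[\phi] \in \Mod_g$ sends $(\Sigma_g, [\varphi])$ to $(\Sigma_g, [\phi \circ \varphi])$: the underlying hyperbolic surface $\Sigma_g$ is unchanged, so the marked length spectrum, and in particular the set of lengths of simple closed geodesics, is preserved. Hence the condition $\ell(\gamma) \geq \varepsilon$ for all simple closed geodesics $\gamma$ is invariant under $\Mod_g$. The compactness of the quotient $\Mod_g \backslash \T_g(\varepsilon)$ is exactly Mumford's compactness criterion: the image of $\T_g(\varepsilon)$ in moduli space $\mathcal M_g$ is the closed locus of hyperbolic surfaces with systole at least $\varepsilon$, which is compact by a standard area/injectivity-radius argument.

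The real analytic manifold-with-corners structure is the main content. For each free homotopy class $c$ of essential simple closed curves on $S_g$, the geodesic length function $\ell_c : \T_g \to \R_{>0}$ is real analytic, and one has
\begin{equation*}
\T_g(\varepsilon) = \bigcap_{c} \{\ell_c \geq \varepsilon\}.
\end{equation*}
I would then show two local properties: (i) in a neighborhood of any point $x_0 \in \T_g$, only finitely many length functions $\ell_c$ take values in $[\varepsilon, \varepsilon + \delta]$ for some $\delta > 0$, so only finitely many of the defining inequalities are relevant locally; (ii) where several inequalities are simultaneously active, the corresponding length functions have linearly independent differentials. For (i), a continuity and properness argument (or an elementary area bound on the number of simple closed geodesics of bounded length on a fixed surface) suffices. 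For (ii), the assumption $\varepsilon \leq \varepsilon_0$ from the collar theorem forces the simultaneously short simple closed geodesics to be pairwise disjoint; hence they form a part of a pants decomposition, and I can extend to Fenchel--Nielsen coordinates in which these length functions are coordinate functions. This gives linear independence of the differentials and identifies the local structure near a point where $k$ of the inequalities are tight with $[0,\infty)^k \times \R^{6g-6-k}$, i.e.\ a corner of codimension $k$.

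Finally, the passage to an equivariant triangulation and to a finite CW-complex quotient. Since $\Mod_g$ acts properly and real analytically on $\T_g$ and preserves the real analytic stratification of $\T_g(\varepsilon)$ by corner depth, one can apply Illman's equivariant triangulation theorem (or Lojasiewicz-style triangulation of semi-analytic sets, made equivariant by averaging over stabilizers, which are finite) to produce a $\Mod_g$-equivariant triangulation of $\T_g(\varepsilon)$ compatible with the corner stratification. Compactness of $\Mod_g \backslash \T_g(\varepsilon)$ then forces the induced CW-structure on the quotient to have finitely many cells.

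The step I expect to be the main obstacle is the clean verification of (ii), because the local independence of length differentials must be argued uniformly enough to handle every possible configuration of simultaneously short geodesics; the collar/pants-decomposition reduction is precisely what makes this step work, and is the reason the bound $\varepsilon \leq \varepsilon_0$ enters the hypothesis.
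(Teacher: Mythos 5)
Your proposal is correct and follows essentially the same route as the paper: stability is tautological, compactness is Mumford's criterion, and the manifold-with-corners structure comes from the collar theorem forcing simultaneously short geodesics to be disjoint, hence extendable to a pants decomposition so that their length functions are part of Fenchel--Nielsen coordinates with independent differentials. The only additions you make are the explicit local-finiteness step (i) and the citation of Illman-style equivariant triangulation, both of which the paper leaves implicit.
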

\begin{proof}
It is clear that $\T_g(\varepsilon)$ is stable under $\Mod_g$ since its definition
does not depend on the markings.
The compactness of the quotient $\Mod_g\backslash
\T_g(\varepsilon)$ follows from the Mumford compactness criterion for subsets of 
$\Mod_g\backslash \T_g$ \cite{mu}. 
Near any boundary point $p=(\Sigma_g, [\varphi])\in \partial \T_g(\varepsilon) - \T_g(\varepsilon)$,
the subspace $\T_g(\varepsilon)$ is defined by the inequalities:
$$\ell(\gamma_1), \cdots, \ell(\gamma_k)\geq \varepsilon,$$
where $\gamma_1, \cdots, \gamma_k$ are all the simple closed geodesics
on the marked surface $\Sigma_g$ such that 
$\ell(\gamma_1)(p)=\varepsilon, \cdots, \ell(\gamma_k)(p)=\varepsilon$.
By the assumption on $\varepsilon$, the geodesics 
$\gamma_1, \cdots, \gamma_k$ are disjoint. 
Then they can form a part of a collection of a pants decomposition of $\Sigma_g$,
and their length functions are a part of the associated Fenchel-Nielsen
coordinates, 
and hence their differentials $d \ell(\gamma_1), \cdots,  d \ell(\gamma_k)$
are linearly independent. 
This implies that 
the subspace of $\T_g$ defined by the inequalities $\ell(\gamma_1) \geq \varepsilon, \cdots, \ell(\gamma_k)\geq\varepsilon$
is a real analytic submanifold of $\T_g$ with corners near the point $(\Sigma_g, [\varphi])$. 
\end{proof}

The main result of \cite[Theorems 1.2 and 1.3]{jw} is the following result.

\begin{prop} For every $\varepsilon$ with $0<\varepsilon\leq \varepsilon_0$, 
there exists a $\Mod_g$-equivariant deformation retraction of $\T_g$ to $\T_g(\varepsilon)$.
In particular, $\T_g(\varepsilon)$ is a cofinite model of the 
universal space $\underline{E}\ga$ for proper actions
of $\ga=\Mod_g$. 
\end{prop}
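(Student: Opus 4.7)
The plan is to construct a $\Mod_g$-equivariant vector field $V$ on $\T_g\setminus \T_g(\varepsilon)$ whose flow simultaneously lengthens every sufficiently short simple closed geodesic, and then to use the reparametrized flow of $V$ as the deformation retraction. The starting point is the observation already used in the proof of Proposition \ref{truncated-subspace}: at any $x\in \T_g\setminus \T_g(\varepsilon)$, the finite collection $\gamma_1,\ldots,\gamma_k$ of simple closed geodesics with $\ell(\gamma_i)(x)<\varepsilon$ is pairwise disjoint by the collar theorem and the assumption $\varepsilon\leq\varepsilon_0$. Hence these geodesics extend to a pants decomposition, and the length functions $\ell(\gamma_i)$ are coordinate functions in the associated Fenchel-Nielsen chart, with linearly independent differentials at $x$.

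First, I would produce local lengthening vector fields. In a small neighborhood $U_x$ of each $x$, set $V_x=\sum_{i=1}^k \partial/\partial \ell(\gamma_i)$ in the Fenchel-Nielsen coordinates above. Shrinking $U_x$ if necessary, any simple closed geodesic $\gamma$ with $\ell(\gamma)<\varepsilon$ on $U_x$ must lie in $\{\gamma_1,\ldots,\gamma_k\}$, because lengths are continuous and the collar theorem forbids new short geodesics from appearing without first crossing $\varepsilon$. Thus $V_x\cdot \ell(\gamma)>0$ for every geodesic that is short somewhere on $U_x$.

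Next, I would patch the $V_x$'s into a global vector field via a partition of unity $\{\varphi_\alpha\}$ subordinate to the cover $\{U_x\}$. The positivity property is convex: any positive combination $V=\sum_\alpha \varphi_\alpha V_{x_\alpha}$ still satisfies $V\cdot \ell(\gamma)>0$ on the set $\{\ell(\gamma)<\varepsilon\}$. To enforce $\Mod_g$-equivariance, I would construct the cover and the partition of unity downstairs on the paracompact Hausdorff quotient $\Mod_g\backslash \T_g$ and pull back, or equivalently average a chosen partition of unity over $\Mod_g$-orbits; proper discontinuity of the action keeps these orbit sums locally finite. The resulting $V$ is $\Mod_g$-invariant, smooth, and preserves the level sets $\{\ell(\gamma)=\varepsilon\}$ in the sense of pushing trajectories into the half-space $\{\ell(\gamma)\geq \varepsilon\}$.

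Finally, I would reparametrize so that each trajectory reaches $\T_g(\varepsilon)$ in time $\leq 1$, set $V=0$ on $\T_g(\varepsilon)$, and define $H:\T_g\times[0,1]\to \T_g$ as the corresponding flow; a uniform arrival time follows from compactness of $\Mod_g\backslash \{x:\varepsilon'\leq \min_\gamma \ell(\gamma)(x)\leq \varepsilon\}$ for any $\varepsilon'<\varepsilon$, which gives a positive lower bound on $V\cdot \min_i\ell(\gamma_i)$ off of $\T_g(\varepsilon)$. The main obstacle, as the author emphasizes in the paragraphs following the proposition, is the equivariance of the partition of unity: the collection of short geodesics varies discontinuously in identity (though upper semi-continuously as a set) as $x$ moves, so the invariant patching must be carried out without destroying the simultaneous-lengthening property on every thin stratum. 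It is this non-canonicity that motivates the intrinsic well-rounded construction of Theorem \ref{can-def} below.
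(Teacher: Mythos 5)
Your proposal essentially reproduces the strategy of Ji--Wolpert \cite{jw}, which is exactly what the paper cites for this proposition: construct local vector fields that simultaneously lengthen all short geodesics, then glue them into a global $\Mod_g$-invariant vector field by a $\Mod_g$-invariant partition of unity. The one variation is minor: you take as local generators the Fenchel--Nielsen coordinate fields $\sum_i\partial/\partial\ell(\gamma_i)$ rather than Weil--Petersson gradients $\nabla\ell(\gamma_i)$ as in \cite{jw}; both strictly increase every short length, though the gradient fields depend only on the short curves themselves, whereas your coordinate fields require the auxiliary choice of a pants decomposition extending $\gamma_1,\ldots,\gamma_k$, which is one more non-canonical choice the invariant patching must absorb. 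You correctly identify the partition of unity as the delicate step --- indeed, as the paper emphasizes, the non-canonical nature of that patching is precisely what motivates the intrinsic construction of Theorem \ref{can-def}.
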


The idea of the proof in \cite{jw} is as follows.
For any marked hyperbolic surface $(\Sigma_g, [\varphi])$ in the thin part $\T_g-\T_g(\varepsilon)$,
we increase the lengths of the short geodesics by following the  flow
of a local vector field which is a suitable linear combination of the gradient vectors
of the length functions of these short geodesics.
Specifically, let $\gamma_1, \cdots, \gamma_k$ be all the short geodesics
of $\Sigma_g$ such that $\ell(\gamma_1)\leq  \cdots\leq \ell(\gamma_k) \leq \varepsilon$.
In a simple case, suppose that $\ell(\gamma_1)<\ell(\gamma_2)$.
For any geodesic $\gamma$, let $\nabla \ell(\gamma)$ be the gradient of the function
$\ell(\gamma)$ with respect to the Weil-Petersson metric of $\T_g$.
Then the flow along the vector field
$\nabla \ell(\gamma_1)$ will increase $\ell(\gamma_1)$ until it reaches
$\ell(\gamma_2)$ or $\varepsilon$.
The point to make use of the Weil-Petersson metric is that it is intrinsic
and hence the flow is automatically $\Mod_g$-equivariant. 

On the other hand, a difficulty occurs if $\ell(\gamma_1)=\ell(\gamma_2)$ since it is not clear
whether we should use either $\nabla \ell(\gamma_1)$ or $\nabla \ell(\gamma_2)$.

The way to solve this problem in \cite{jw} consists of two steps:
(1) introduce a local vector field near every point in the thin part $\T_g-\T_g(\varepsilon)$, 
which, in the notation above,  is a suitable
linear combination of  $\nabla \ell(\gamma_1), \cdots,
\nabla \ell(\gamma_k)$ on a small neighborhood
of  $(\Sigma_g, [\varphi])$ in $\T_g$ such that under its flow, 
the lengths of all the short geodesics $\gamma_1, \cdots, \gamma_k$ 
are increased  simultaneously, 
(2) use a suitable $\Mod_g$-invariant partition of unity to glue up the
local vector fields to obtain a desired global vector field on the thin part
$\T_g-\T_g(\varepsilon)$ that is {\em invariant} under $\Mod_g$.

The construction of the partition of unity in Step (2) is complicated, but not
canonical or intrinsic.
A natural problem is to obtain an equivariant deformation retraction of
$\T_g$ to $\T_g(\varepsilon)$ which only depends on the intrinsic geometry
of the hyperbolic surfaces $\Sigma_g$ and the geometry of $\T_g$.
The first purpose of this paper is to construct such an intrinsic equivariant
deformation retraction. To do this, we first recall the well-rounded deformation retraction
of lattices in $\R^n$.

\begin{rem}\label{well-rounded}
{\em {\bf \  Well-rounded deformation retraction of lattices.}
The pair $(\T_g, \Mod_g)$ has often be compared with the pair 
$(SL(n, \R)/SO(n), SL(n, \Z))$ of 
a symmetric space $SL(n, \R)/SO(n)$ of noncompact type and an arithmetic subgroup 
$SL(n, \Z)$ acting on it.
Unlike a general symmetric space, $SL(n, \R)/SO(n)$ is the moduli space of marked unimodular
lattices in $\R^n$, where a {\em marked lattice} is a lattice $\Lambda\subset \R^n$
together with an ordered basis $v_1, \cdots, v_n$ of $\Lambda$,
and a lattice $\Lambda\subset \R^n$ is {\em unimodular} if  $vol(\Lambda\backslash \R^n)=1$. 
The locally symmetric space $SL(n, \Z)\backslash SL(n, \R)/SO(n)$ is the moduli space
of unimodular lattices in $\R^n$ up to isometry.
As pointed out above, when $n=2$, $SL(n, \R)/SO(n)$  is equal to the Teichm\"uller space $\T_1$,
and the deformation retraction described below gives an equivariant spine of $\T_1$. 

There is a known $SL(n, \Z)$-equivariant deformation retraction of $SL(n, \R)/SO(n)$
to the subspace of well-rounded lattices by successively scaling up the inner product
on the linear subspace spanned by the shortest lattice vectors and hence
increasing the length of shortest geodesics of the associated flat tori in order to reach 
more rounded flat tori. According to \cite{as1}, this result is due to Soule and Lannes
and  was presented 
in the unpublished thesis of Soule. A generalization of this method to the symmetric space 
associated with the general linear group of a division algebra
over $\Q$ is given in \cite{as1}. Since we only need the above special case 
$(SL(n, \R)/SO(n), SL(n, \Z))$, 
we give a simplified summary  of the deformation retraction in \cite{as1}
to motivate the deformation retraction of $\T_g$ in the next section. 

More precisely, let $\R^n$ be given the usual Euclidean inner
product $\langle, \rangle$, and $\Lambda\subset \R^n$ be a lattice.
Let 
$$m(\Lambda)=\inf \{\langle v, v\rangle \mid v\in \Lambda-0\},$$
and $$M(\Lambda)=\{v\in \Lambda\mid \langle v, v\rangle = m(\Lambda)\}.$$
If $M(\Lambda)$ spans $\R^n$, then the lattice $\Lambda$ is called a {\em well-rounded lattice}.
If $\Lambda\subset \R^n$ is a unimodular, not well-rounded lattice in $\R^n$,
then it can be deformed {\em canonically}
 to a well-rounded unimodular lattice in several steps.

These notions can also be defined for marked lattices.
Since there is a natural marking in the deformation, we suppress the marking in the following
discussion. Or equivariantly,  we are defining a deformation retraction
of the locally symmetric space $SL(n, \Z)\backslash SL(n, \R)/SO(n)$.

Suppose $\Lambda$ is not a well-rounded lattice. Then the span $M(\Lambda)\otimes \R$,
denoted by $V_M(\Lambda)$, 
is a proper linear subspace of $\R^n$. Let  $V_M^\perp(\Lambda)$ be the orthogonal
complement of $V_M(\Lambda)$ in $\R^n$. 
For any $t\geq 1$, define a new inner product $\langle, \rangle_t$
 on $\R^n$ such that on $V_M(\Lambda)$,
the inner product is scaled up by $t$, and on $V_M^\perp(\Lambda)$, it is scaled down
by a unique factor so that with respect to the new inner $\langle, \rangle_t$ on $\R^n$,
the lattice $\Lambda$ is still a unimodular lattice. Note that this inner product depends
on the lattice $\Lambda$. Scaling
on the subspaces $V_M(\Lambda)$ and $V_M^\perp(\Lambda)$
in the opposite direction gives a canonical isometric identification
between the two Euclidean spaces $(\R^n, \langle, \rangle_t)$ and $(\R^n, \langle, \rangle)$,
and the image of $\Lambda$ gives a new lattice $\Lambda_t$ in the standard 
Euclidean space $(\R^n, \langle, \rangle)$. 

If $\Lambda$ is given a marking, i.e., an ordered basis $v_1, \cdots, v_n$, then
the images of $v_1, \cdots, v_n$  in $\Lambda_t$ form a basis of $\Lambda_t$ as well.
 What is changed is the inner
product and hence  lengths of the vectors. This process
 also gives a canonical identification between
marked lattices $\Lambda_t$ and $\Lambda$.

In the deformation $\Lambda_t$, the monimal norm $m(\Lambda_t)$ is increasing.
We deform $\Lambda$ to $\Lambda_t$ until $M(\Lambda_t)$ contains at least one more
independent lattice vector, i.e., the dimension of $M(\Lambda_t)\otimes \R$
increases at least by 1 (note that for small values of $t$, $M(\Lambda_t)$ stays
constant under the above identification between $\Lambda_t$ and $\Lambda$).
 This finishes the first step of the deformation. Now we start again
using the new vector subspace $V_M(\Lambda_t)$, and deform it again by increasing
the norms of all {\em minimal lattice vectors simultaneously at the same rate}.
After finitely many steps, $M(\Lambda_t)$ spans $\R^n$ and the lattice $\Lambda_t$
is well-rounded.

Though the above deformation procedure is canonical, we still need to show that it
gives a {\em continuous} map on $Y=SL(n, \Z)\backslash SL(n, \R)/SO(n)$. 
To do this, we define a filtration of $Y$: 
$$Y=Y_1\supset Y_2\cdots \supset Y_n, $$
where $Y_j=\{\Lambda \in Y\mid \dim M(\Lambda)\otimes \R\geq j\}$, $j=1, \cdots, n$.
Clearly, $Y_1=Y$, and $Y_n$ is the subspace of well-rounded lattices.
The complement $Y_{j-1}-Y_{i}$ consists of lattices whose associated subspace $M(\Lambda)\otimes \R$ has dimension equal to $j-1$.   
The deformation retraction to  the subspace $Y_n$ of well-rounded lattices consists of 
composition of the deformation
retractions of $Y_{j-1}$ to $Y_{i}$ for $j=2, \cdots, n$.
Hence it suffices to show that  at every step, the retraction of $Y_{j-1}$ to $Y_{j}$ is continuous.

Let $\Lambda, \Lambda'\in Y_{j-1}$ be two lattices. If $\Lambda\in Y_{j-1}-Y_j$,
we deform it to $\Lambda_t\in Y_j$ by the procedure described above; otherwise, $\Lambda\in Y_j$ and set $\Lambda_t=\Lambda$. Similarly, we can define the deformation image
$\Lambda'_{t'}\in Y_j$.
We need to show that  $\Lambda_t$ and $\Lambda'_{t'}$ are close  whenever
$\Lambda, \Lambda'$ are close.

There are two cases:
(1) Suppose $\Lambda\in Y_j$. If $\Lambda'\in Y_j$, then $\Lambda'_{t'}=\Lambda'$
is close to $\Lambda_t=\Lambda$ by assumption.
If $\Lambda'\in Y_{j-1}-Y_j$, then the next shortest norm of vectors in $\Lambda'$
after $m(\Lambda')$ is close to $m(\Lambda')$.
The reason is that $\dim M(\Lambda)\otimes \R$ is 
at least $j$ but $\dim M(\Lambda')\otimes \R=j-1$.
This implies that the stretching factor
in reaching $\Lambda'_{t'}$ from $\Lambda'$ is close to 1, and  $\Lambda'_{t'}$ is close to $\Lambda'$ and hence close to $\Lambda_t=\Lambda$.

(2) Suppose that $\Lambda\in Y_{j-1}-Y_j$. Then $\dim M(\Lambda)\otimes \R=j-1$.
We claim that when $\Lambda'$ is close enough to $\Lambda$,
then $\dim M(\Lambda')\otimes \R=j-1$.  
By assumption, for any $v\in \Lambda-M(\lambda)-\{0\}$, $||v||> m(\Lambda)$,
and hence these exists a positive number $\varepsilon$ such that 
for all $v\in \Lambda-M(\lambda)-\{0\}$, $||v||\geq  m(\Lambda)+\varepsilon.$
Then for any $\Lambda'\in Y_{j-1}$ close to $\Lambda$, there exists $g\in SL(n, \R)$
close to the identity element such that $\Lambda'=g \Lambda$. This implies that only
minimal vectors in $M(\Lambda)$ can be mapped to $M(\Lambda')$,
i.e., $M(g\Lambda)\subseteq g M(\Lambda)$, and hence
$\dim M(\Lambda')\otimes \R\leq \dim M(\Lambda)\otimes \R$.
Since $\Lambda'\in Y_{j-1}$, $\dim M(\Lambda')\otimes \R \geq j-1$.
By assumption, $\dim M(\Lambda)\otimes \R=j-1$, it follows that
$\dim M(\Lambda')\otimes \R=\dim M(\Lambda)\otimes \R$.

Since $m(\Lambda)$ and $m(\Lambda')$ are close and the next smallest norms in $\Lambda, \Lambda'$ are also close, the equality of the dimension 
$\dim M(\Lambda')\otimes \R=\dim M(\Lambda)\otimes \R$ implies that 
 the scaling factor $t$ needed for $M(\Lambda_t)$ to have a higher dimension,
i.e., for $\Lambda_t$ to reach $Y_j$ is close
to the scaling factor $t'$ needed for $\Lambda'_{t'}$ to reach $Y_j$. 
This implies that $\Lambda_t$ and $\Lambda'_{t'}$ are close. This completes
the proof of the continuity of the deformation retraction from $Y_{j-1}$ to $Y_j$, and hence
of the deformation retraction of $Y$ to the subspace $Y_n$ of well-rounded lattices.
}
\end{rem}

\vspace{.1in}

A tempting idea is to carry out the same deformation for Teichm\"uller spaces
by increasing the lengths of shortest geodesics while decreasing the lengths
of other geodesics. But there are no linear structures and orthogonal complement
on the set of closed geodesics
as in the case of lattices in $\R^n$, and it is not clear whether such a deformation is 
possible. We will need to deform differently. 

For any  positive $\varepsilon\leq \varepsilon_0$, decompose the thin part
$\T_g-\T_g(\varepsilon)$ into a disjoint union of submanifolds
according to the multiplicity of the shortest geodesics, or {\em systoles}.

Every simple closed curve $c$ of the base surface $S_g$
which is not homotopic to a point 
induces a unique simple closed geodesic in every marked
hyperbolic surface $(\Sigma_g, [\varphi])$, which is contained 
in the homotopy class $[\varphi^{-1}(c)]$ of simple closed curves. 

For a collection of pairwise disjoint simple closed curves $c_1, 
\cdots, c_k$ of $S_g$, let $\gamma_1, \cdots, \gamma_k$ be the corresponding
geodesics in $(\Sigma_g, [\varphi])$.
Define a subspace 
$$\T_{g, c_1, \cdots, c_k}=\{(\Sigma_g, [\varphi])\mid \ell(\gamma_1)=\cdots=\ell(\gamma_k)< \ell(\gamma),
\text{\ for any other simple closed geodesic\ } \gamma\subset \Sigma\}.$$ 
In terms of systoles, $\T_{g, c_1, \cdots, c_k}$ consists of
hyperbolic surfaces whose systoles  are disjoint simple closed geodesics
$\gamma_1, \cdots, \gamma_k$.

\begin{prop}\label{decomposition}
For every collection of disjoint simple closed curves $c_1, \cdots, c_k$,   the index
$k$ satisfies the bound: $k\leq 3g-3$.
The intersection $(\T_g-\T_g(\varepsilon))\cap \T_{g, c_1, \cdots, c_k}$ is 
a nonempty  real analytic submanifold.
The thin part $\T_g-\T_g(\varepsilon)$ admits a $\Mod_g$-equivariant
{\bf disjoint decomposition} into $(\T_g-\T_g(\varepsilon))\cap \T_{g, c_1, \cdots, c_k}$,
when $\{c_1, \cdots, c_k\}$ ranges over all possible collections of disjoint simple closed
curves of the base surface $S_g$. 
\end{prop}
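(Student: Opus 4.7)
The plan is to establish the three assertions in sequence. The bound $k\leq 3g-3$ is a classical topological fact: any collection of pairwise disjoint, pairwise non-homotopic essential simple closed curves on $S_g$ extends to a pants decomposition, and any such decomposition of a closed genus $g\geq 2$ surface consists of exactly $3g-3$ curves, as the Euler-characteristic computation $\chi(S_g)=2-2g=-(2g-2)$ must be distributed over $2g-2$ three-holed spheres.

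For the decomposition, given any $(\Sigma_g,[\varphi])\in \T_g-\T_g(\varepsilon)$, let $\gamma_1,\ldots,\gamma_k$ be its systoles, i.e., the simple closed geodesics of minimal length. Because $\ell(\gamma_i)<\varepsilon\leq\varepsilon_0$, the collar theorem stated just before Proposition \ref{truncated-subspace} forces the $\gamma_i$ to be pairwise disjoint, and distinct simple closed geodesics on a hyperbolic surface are automatically non-homotopic, so pulling back via the marking produces a unique system of disjoint simple closed curves $c_1,\ldots,c_k$ on $S_g$ with $(\Sigma_g,[\varphi])\in \T_{g,c_1,\ldots,c_k}$. Disjointness of the decomposition is immediate because the isotopy type of the systole system is an intrinsic invariant of the marked surface, and $\Mod_g$-equivariance follows because a mapping class $\phi$ simply relabels the marking and sends $\T_{g,c_1,\ldots,c_k}$ bijectively onto $\T_{g,\phi(c_1),\ldots,\phi(c_k)}$.

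For the submanifold property I would extend $\{c_1,\ldots,c_k\}$ to a pants decomposition of $S_g$ (possible by the first assertion) and work in the associated real-analytic Fenchel-Nielsen coordinates on $\T_g$. Since $\ell(\gamma_1),\ldots,\ell(\gamma_k)$ occur among the $3g-3$ length coordinates, their differentials are everywhere linearly independent, and consequently the $k-1$ equations $\ell(\gamma_1)=\cdots=\ell(\gamma_k)$ cut out a real-analytic submanifold of $\T_g$ of codimension $k-1$. The additional requirements $\ell(\gamma_i)<\varepsilon$ and $\ell(\gamma_i)<\ell(\gamma)$ for every other simple closed geodesic $\gamma$ are open conditions, so the intersection with $\T_g-\T_g(\varepsilon)$ is an open piece of this submanifold. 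Nonemptiness is exhibited by the Fenchel-Nielsen point with $\ell(\gamma_i)=\eta$ (a common value below $\varepsilon$) for $i=1,\ldots,k$, the remaining pants-curve lengths taken sufficiently large, and arbitrary twist parameters.

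The hard part is the quantitative verification that accompanies this last step: one must check that when $\eta$ is small and the other pants-curve lengths are large, every simple closed geodesic on $\Sigma_g$ other than $\gamma_1,\ldots,\gamma_k$ has length strictly greater than $\eta$. I would argue via the quantitative form of the collar lemma (a geodesic crossing a short $\gamma_i$ traverses an embedded annular collar of width tending to $\infty$ as $\eta\to 0$) combined with hyperbolic trigonometry inside a pair of pants with long boundaries (to bound below the length of any closed geodesic disjoint from every $\gamma_i$). Together these estimates force every competing simple closed geodesic to be uniformly longer than $\eta$, completing the verification that $\gamma_1,\ldots,\gamma_k$ really are the systoles at the constructed point.
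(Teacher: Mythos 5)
Your overall plan mirrors the paper's: the $k\leq 3g-3$ bound from completing to a pants decomposition, the submanifold claim via linear independence of length coordinates in Fenchel--Nielsen coordinates (exactly the mechanism in the proof of Proposition \ref{truncated-subspace}), and the disjointness/equivariance from the intrinsic nature of the systole set. The paper is terse on all of this, and you correctly identify the one real issue, namely verifying nonemptiness: exhibiting a surface where the chosen $\gamma_1,\dots,\gamma_k$ really are the systoles.

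However, your construction for nonemptiness is wrong as stated. You set $\ell(\gamma_i)=\eta$ for $i\leq k$ and take the remaining pants-curve lengths ``sufficiently large.'' This backfires: if a pair of pants has two boundary components of length $L$ that are glued to each other to form a handle, the orthogeodesic seam between those two boundaries has length tending to $0$ as $L\to\infty$, and (for bounded twist) so does the length of the dual closed geodesic crossing the handle once. So making the remaining pants curves very long produces new short geodesics elsewhere, and $\gamma_1,\dots,\gamma_k$ need not be the systoles at all; your appeal to ``hyperbolic trigonometry inside a pair of pants with long boundaries'' would actually give an upper bound that goes to zero, not the lower bound you need. The correct choice is the opposite: keep the remaining pants-curve lengths bounded (say all equal to $1$), with twists, say, zero. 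Then as $\eta\to 0$ the collar lemma makes any geodesic crossing some $\gamma_i$ ($i\leq k$) very long, while any geodesic disjoint from $\gamma_1,\dots,\gamma_k$ lives in the complementary subsurface with boundary lengths $\eta\to 0$, whose internal geodesic-length functions converge to those of a cusped surface and hence are uniformly bounded below. For $\eta$ small enough this forces every other simple closed geodesic to be strictly longer than $\eta$, which is the statement you want; it also matches the spirit of the sentence in the paper's proof about geodesics with ``arbitrarily short lengths.''

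One small further point of care: the equations $\ell(\gamma_1)=\cdots=\ell(\gamma_k)$ have independent differentials because all $k$ lengths appear among the Fenchel--Nielsen length coordinates, as you say, so $\T_{g,c_1,\dots,c_k}$ is cut out as a codimension $k-1$ real-analytic locus; but you should also note, as you do implicitly, that the condition ``$\ell(\gamma)>\ell(\gamma_1)$ for all other $\gamma$'' and membership in the thin part are open, so the intersection remains a submanifold of that codimension. That part is fine.
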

\begin{proof}
The first statement is standard that the maximum number of disjoint, simple closed geodesics
  in every hyperbolic surface $\Sigma_g$ is equal to $3g-3$.
The second  statement follows from the proof of  Proposition \ref{truncated-subspace}.
and  the fact that for any collection of disjoint simple closed curves $c_1, \cdots, c_k$,
 there are marked hyperbolic surfaces whose corresponding simple closed geodesics
 have arbitrarily short lengths.
 
 The third statement follows from the fact that for any hyperbolic surface
 $\Sigma_g$, the lengths of its simple closed
 geodesics form an increasing sequence with finite multiplicity going to infinity,
and hence the surface $\Sigma_g$
 belongs to some subspace $\T_{g, c_1, \cdots, c_k}$, where $c_1, \cdots, c_k$
are disjoint since their lengths are less than $\varepsilon$.
\end{proof}

 For each collection $c_1, \cdots, c_k$ of disjoint, simple closed curves of $S_g$,
 we define a vector field $V_{c_1, \cdots, c_k}$
 on  the associated subspace $(\T_g-\T_g(\varepsilon))\cap \T_{g, c_1, \cdots, c_k}$ as follows.
 Since $(\T_g-\T_g(\varepsilon))\cap \T_{g, c_1, \cdots, c_k}$ is a submanifold
 of $\T_g$, the Weil-Petersson metric of $\T_g$ restricts to a Riemannian metric
 on it.
By definition, the length functions $\ell(\gamma_1), \cdots, \ell(\gamma_k)$
are equal on $ \T_{g, c_1, \cdots, c_k}$ and hence define a common
function, denoted by $\ell$.
Let $\nabla \ell^{1/2}$ be the gradient of $\ell^{1/2}$ with respect to the
restricted Riemannian metric on  $(\T_g-\T_g(\varepsilon))\cap \T_{g, c_1, \cdots, c_k}$.

\begin{lem}\label{bounded}
In the above notation,  the function $\ell$ has no critical point
on $(\T_g-\T_g(\varepsilon))\cap \T_{g, c_1, \cdots, c_k}$ and 
the vector field  $\nabla \ell^{1/2}$, denoted by $V_{c_1, \cdots, c_k}$,
 does not vanish at any point
on $(\T_g-\T_g(\varepsilon))\cap \T_{g, c_1, \cdots, c_k}$. 
Furthermore, $\nabla\ell^{1/2}$
is uniformly bounded away from 0 and from above.
\end{lem}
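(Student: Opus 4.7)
My plan is to split the lemma into two pieces: (a) show that $d\ell$, and hence $\nabla\ell^{1/2}=\frac{1}{2\sqrt{\ell}}\nabla\ell$, never vanishes on $M:=(\T_g-\T_g(\varepsilon))\cap\T_{g,c_1,\ldots,c_k}$; and (b) use Wolpert's asymptotic expansion of the Weil--Petersson metric near the thin part to bound $\|\nabla\ell^{1/2}\|_{WP}$ uniformly from $0$ and from above. The input for (a) is Fenchel--Nielsen coordinates, as already used in Proposition \ref{truncated-subspace}, and the input for (b) is Wolpert's short-geodesic estimates combined with Mumford-type compactness away from the boundary.

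For (a), I would complete the disjoint collection $c_1,\ldots,c_k$ to a pants decomposition $c_1,\ldots,c_{3g-3}$ of $S_g$. This produces global Fenchel--Nielsen coordinates $(\ell_1,\tau_1,\ldots,\ell_{3g-3},\tau_{3g-3})$ on $\T_g$ with $\ell_i=\ell(\gamma_i)$, so that $d\ell(\gamma_1),\ldots,d\ell(\gamma_k)$ are everywhere linearly independent. Near any $p\in M$ the stratum is locally cut out in $\T_g$ by the $k-1$ equations $\ell(\gamma_1)-\ell(\gamma_i)=0$, $i=2,\ldots,k$, so
\[
T_pM=\{v\in T_p\T_g:\ d\ell(\gamma_1)(v)=\cdots=d\ell(\gamma_k)(v)\}.
\]
The coordinate vector $w=\partial/\partial\ell_1+\cdots+\partial/\partial\ell_k$ lies in $T_pM$ and satisfies $d\ell(w)=d\ell(\gamma_1)(w)=1\neq 0$, hence $d\ell$, $\nabla\ell$, and $\nabla\ell^{1/2}$ are nowhere zero on $M$.

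For (b), I would appeal to Wolpert's asymptotic formulas for the Weil--Petersson metric near the pinching of short simple closed geodesics: $\|\nabla\ell(\gamma_i)\|_{WP}^2\sim \tfrac{2}{\pi}\ell(\gamma_i)$ as $\ell(\gamma_i)\to 0$, together with the asymptotic orthogonality $\langle \nabla\ell(\gamma_i)^{1/2},\nabla\ell(\gamma_j)^{1/2}\rangle_{WP}\to 0$ for distinct disjoint simple closed geodesics. Consequently the Gram matrix of $\nabla\ell(\gamma_1)^{1/2},\ldots,\nabla\ell(\gamma_k)^{1/2}$ tends to $\tfrac{1}{2\pi}I_k$ uniformly as $\ell\to 0$. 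Since $\nabla_M\ell^{1/2}$ is the Weil--Petersson orthogonal projection onto $T_pM$ of any single $\nabla\ell(\gamma_i)^{1/2}$ (their differentials agree on $T_pM$), this asymptotic near-orthonormality converts directly into matching upper and lower bounds for $\|\nabla_M\ell^{1/2}\|_{WP}$ in a neighbourhood of the boundary of $\T_g$. On the complementary region where $\ell$ stays bounded below by some $\delta>0$, the image of $M$ in $\Mod_g\backslash\T_g$ is relatively compact by the Mumford compactness criterion, and continuity of $\nabla\ell^{1/2}$ together with the non-vanishing from (a) gives the required uniform bounds there.

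The main obstacle I anticipate is the uniform lower bound in part (b) as $\ell\to 0$: the projection $\nabla_M\ell^{1/2}$ could in principle degenerate in the limit if the directions $\nabla\ell(\gamma_i)^{1/2}$ became too nearly proportional to each other in the Weil--Petersson metric. Wolpert's asymptotic orthogonality of length gradients for disjoint geodesics is exactly what prevents this degeneration, so extracting the correct quantitative form of his estimates is the technical crux of the argument.
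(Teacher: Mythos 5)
Your argument is correct and follows the same broad strategy as the paper: Fenchel--Nielsen coordinates for the non-vanishing, and Wolpert's expansion of the Weil--Petersson pairing of length gradients for the uniform bounds. The one genuine point of difference is in part (a). The paper obtains non-vanishing of the tangential gradient by quoting Wolpert's positivity $\langle\nabla\ell_i,\nabla\ell_j\rangle_{WP}>0$ for disjoint geodesics and appealing to the ``partial diagonal'' picture; your explicit tangent vector $w=\partial/\partial\ell_1+\cdots+\partial/\partial\ell_k$, which lies in $T_pM$ and satisfies $d\ell(w)=1$, is cleaner and more self-contained, and it makes visible that linear independence of $d\ell_1,\ldots,d\ell_k$ (already furnished by the Fenchel--Nielsen argument in Proposition \ref{truncated-subspace}) is all that is needed at this step. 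For part (b) you correctly identify the crux: one must rule out degeneration of the orthogonal projection onto $T_pM$, and Wolpert's near-orthonormality $\langle\nabla\ell_i^{1/2},\nabla\ell_j^{1/2}\rangle_{WP}\to\frac{1}{2\pi}\delta_{ij}$ does exactly this, giving $\|\nabla_M\ell^{1/2}\|^2\to\frac{1}{2\pi k}>0$ as $\ell\to0$. One small economy: as the remark following the lemma points out, Wolpert's estimate already controls $\|\nabla\ell^{1/2}\|_{WP}$ uniformly on the whole range $\ell\in(0,a]$, so the separate Mumford-compactness argument you run on $\{\ell\ge\delta\}$ is not actually needed, though it is not incorrect.
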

\begin{proof}
Since the geodesics $\gamma_1, \cdots, \gamma_k$ are disjoint, their length functions
$\ell_1, \cdots, \ell_k$
appear as a part of the Fenchel-Nielsen coordinates associated
with a collection of maximal disjoint simple closed geodesics. This implies that
$\nabla\ell_i\neq 0$ for $i=1, \cdots, k$.
By \cite[Lemma 3.12]{wo3}, for any two disjoint geodesics $\gamma_i, \gamma_j$, 
$\langle \nabla\ell_i, \nabla\ell_j\rangle >0$.
This implies that on $\T_{g, c_1, \cdots, c_k}$, which can be thought of a partial diagonal,   $\nabla\ell\neq 0$ at every point.
This implies that $\nabla\ell^{1/2} \neq 0$ too. 
When $\ell$ is small, the uniform boundedness 
of $\nabla\ell^{1/2}$ follows from \cite[Lemma 3.12]{wo3} 
(also \cite[equation (3.1)]{jw}).
\end{proof}

\begin{rem}
{\em
  We note that an important reason for using $\nabla\ell^{1/2}$
instead of $\nabla \ell$ is that the former is uniformly bounded away
from 0 and from above when $\ell$ belongs to $(0, a]$ for any $a>0$, in particular near 0.
 See the discussion on \cite[page 278]{wo2}.
}\end{rem}

\begin{lem}\label{vector-field}
The vector fields $V_{c_1, \cdots, c_k}$ together define a $\Mod_g$-equivariant
vector field  on the thin part $\T_g-\T_g(\varepsilon)$. Denote this vector field by $V$,
\end{lem}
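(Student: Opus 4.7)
The plan is to verify two separate claims implicit in the statement: that the vector fields $V_{c_1,\cdots,c_k}$ patch into a well-defined vector field on $\T_g - \T_g(\varepsilon)$, and that the resulting $V$ is $\Mod_g$-equivariant. Note that the statement does not assert continuity of $V$ across different strata (which one would not in general expect, since the dimensions of adjacent strata can differ), so the argument reduces to a point-by-point verification.

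For well-definedness, I would invoke Proposition \ref{decomposition}: the thin part $\T_g - \T_g(\varepsilon)$ is a \emph{disjoint} union of the strata $(\T_g - \T_g(\varepsilon)) \cap \T_{g, c_1, \cdots, c_k}$ as $\{c_1, \cdots, c_k\}$ ranges over all collections of pairwise disjoint, pairwise non-homotopic simple closed curves on $S_g$. Since the decomposition is disjoint, each point $p$ of the thin part lies in a unique stratum, determined by the collection of its systoles, so setting $V(p) := V_{c_1, \cdots, c_k}(p)$ is unambiguous. By Lemma \ref{bounded}, $V(p)$ is a nonzero tangent vector to $\T_g$ at $p$ (tangent to the stratum through $p$), and it is bounded uniformly away from $0$ and from above.

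For the $\Mod_g$-equivariance, the key inputs I would assemble are: the Weil-Petersson metric on $\T_g$ is $\Mod_g$-invariant; for any $\phi \in \Mod_g$, the induced action on isotopy classes sends a disjoint collection $\{c_1, \cdots, c_k\}$ to another disjoint collection $\{\phi_*c_1, \cdots, \phi_*c_k\}$; and $\phi$ maps the stratum $\T_{g, c_1, \cdots, c_k}$ diffeomorphically onto $\T_{g, \phi_*c_1, \cdots, \phi_*c_k}$, intertwining the associated length functions $\ell(\gamma_i)$ with $\ell(\phi_*\gamma_i)$. Combining these, the restricted Weil-Petersson metrics on the two strata are isometric via $\phi$, and the common length function $\ell$ on the target pulls back to the common $\ell$ on the source. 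Consequently $\phi_*(\nabla \ell^{1/2}) = \nabla \ell^{1/2}$, hence $\phi_* V_{c_1, \cdots, c_k} = V_{\phi_*c_1, \cdots, \phi_*c_k}$; together with the disjoint stratification this is exactly the equivariance of $V$. The only point that requires genuine care, though it is not a real obstacle, is the bookkeeping of the $\Mod_g$-action through the markings $[\varphi]$: one must verify that the action of $\phi$ on curves is compatible with the transport of marked surfaces so that $\ell(\gamma_i)(p) = \ell(\phi_*\gamma_i)(\phi \cdot p)$. This is a standard compatibility in Teichm\"uller theory, and once fixed the equivariance is automatic from the intrinsic nature of the Weil-Petersson metric.
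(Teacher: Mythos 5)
Your proposal is correct and follows essentially the same route as the paper: invoke the disjoint stratification from Proposition \ref{decomposition} for well-definedness, then deduce equivariance from the intrinsic (i.e.\ $\Mod_g$-invariant) nature of the Weil--Petersson metric and the geodesic length functions. You simply unwind the word ``intrinsic'' into the explicit chain $\phi_*(\nabla \ell^{1/2}) = \nabla \ell^{1/2}$ and check compatibility with the action on markings, which the paper leaves implicit.
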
  
 \begin{proof}
By Proposition \ref{decomposition}, the thin part $\T_g-\T_g(\varepsilon)$ admits
a $\Mod_g$-equivariant {\em disjoint} decomposition into $(\T_g-\T_g(\varepsilon))\cap \T_{g, c_1, \cdots, c_k}$.
Therefore, the vector fields $V_{c_1, \cdots, c_k}$ combine and define a vector
field on $\T_g-\T_g(\varepsilon)$.
Since the submanifolds and the vector fields are defined intrinsically in terms
of the length functions and the Weil-Petersson metric,
it is clear that $V$ is equivariant with respect to $\Mod_g$.
 \end{proof}
 
\begin{rem}
{\em 
We note that the vector field $V$ is {\em not continuous} in general.
For example, $\T_{c_1,  c_2}$ is contained in the closure of 
both $\T_{c_1}$ and $\T_{c_2}$. The vector fields $V_{c_1}$ and $V_{c_2}$
will both extend continuously to $\T_{c_1,  c_2}$ but have different values
at these boundary points. The vector field $V_{c_1, c_2}$ is an average of
$V_{c_1}$ and $V_{c_2}$.
The same pheonmenon of discontinuity occurs in the well-rounded deformation of lattices
in $SL(n, \R)/SO(n)$ in Remark \ref{well-rounded}. But the deformation paths are continuous.
In some sense, it amounts to the fact that the integral of a piecewise continuous function
is continuous.}
\end{rem}

\begin{thm}\label{can-def}
 For every $\varepsilon$ with $0<\varepsilon\leq \varepsilon_0$, 
there exists an intrinsic $\Mod_g$-equivariant deformation retraction of $\T_g$ to $\T_g(\varepsilon)$.
In particular, $\T_g(\varepsilon)$ is a cofinite model of the 
universal space $\underline{E}\ga$ for proper actions
of $\ga=\Mod_g$. 
\end{thm}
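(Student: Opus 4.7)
The plan is to define the deformation retraction as the flow of the $\Mod_g$-equivariant vector field $V$ from Lemma \ref{vector-field}, reparameterized by the common systole length so that the deformation completes exactly when that length reaches $\varepsilon$.

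For $x \in (\T_g - \T_g(\varepsilon)) \cap \T_{g,c_1,\ldots,c_k}$ with common systole length $\ell_0(x) < \varepsilon$, Lemma \ref{bounded} provides a nonvanishing real-analytic vector field $V_{c_1,\ldots,c_k} = \nabla \ell^{1/2}$ on this real-analytic stratum. Standard ODE theory produces a unique integral curve through $x$ along which the common systole length strictly increases. The curve stays in $\T_{g,c_1,\ldots,c_k}$ until either $\ell$ reaches $\varepsilon$, in which case the curve has entered $\T_g(\varepsilon)$, or some additional geodesic length $\ell(\gamma_{k+1})$ catches up with $\ell$, at which moment the curve has entered the stratum $\T_{g,c_1,\ldots,c_{k+1}}$; in the latter case, restart the flow using the new vector field and iterate. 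Each transition strictly increases the number of systoles, so by Proposition \ref{decomposition} at most $3g-3$ transitions occur along any trajectory, and by the uniform upper and lower bounds on $\|\nabla \ell^{1/2}\|$ from Lemma \ref{bounded}, each trajectory reaches $\T_g(\varepsilon)$ in finite (and locally uniformly bounded) flow time. Define $H: \T_g \times [0,1] \to \T_g$ by $H(x,t)=x$ for $x \in \T_g(\varepsilon)$, and otherwise by letting $H(x,t)$ be the point on the trajectory through $x$ at which the common systole length equals $\ell_0(x) + t(\varepsilon - \ell_0(x))$. This yields $H(\cdot,0) = \mathrm{id}$, $H(\T_g,1) \subset \T_g(\varepsilon)$, and $H(x,\cdot) \equiv x$ for $x \in \T_g(\varepsilon)$.

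The main obstacle is establishing continuity of $H$, since $V$ is discontinuous at stratum boundaries (cf.\ the remark following Lemma \ref{vector-field}). The strategy mirrors the continuity argument in Remark \ref{well-rounded}. Consider a point $x_0$ on a stratum $\T_{g,c_1,\ldots,c_{k+1}}$ and a nearby point $x'$ in an adjacent higher-dimensional stratum $\T_{g,c_1,\ldots,c_k}$. At $x'$ the excess $\ell(\gamma_{k+1}) - \ell$ tends to zero as $x' \to x_0$; since $\|\nabla \ell^{1/2}\|$ is uniformly bounded above and below, the trajectory from $x'$ reaches $\T_{g,c_1,\ldots,c_{k+1}}$ at a point near the trajectory of $x_0$ after a parameter interval tending to zero, after which both trajectories evolve under $V_{c_1,\ldots,c_{k+1}}$ and stay close by continuous dependence on initial data. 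Iterating this argument over the at most $3g-3$ strata visited, together with continuity of $\ell_0(x)$ in the reparameterization, yields continuity of $H$. Equivariance under $\Mod_g$ is automatic because the stratification, the restricted Weil-Petersson metrics, and the length functions $\ell(\gamma_i)$ are all intrinsic to the underlying marked hyperbolic surface. The assertion that $\T_g(\varepsilon)$ is a cofinite model of $\underline{E}\Mod_g$ then follows by combining this retraction with Proposition \ref{truncated-subspace}.
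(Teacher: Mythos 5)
Your proposal is correct and follows essentially the same approach as the paper: both use the stratification of the thin part by the set of systoles, the intrinsic vector field $V_{c_1,\ldots,c_k}=\nabla\ell^{1/2}$ from Lemma~\ref{bounded}, the uniform lower/upper bounds on its norm, and a continuity argument modeled on the well-rounded retraction of lattices in Remark~\ref{well-rounded}. The only noticeable difference is organizational: the paper obtains the retraction as a composition of $3g-3$ deformation retractions along the filtration $\T_g^{j-1}\cup\T_g(\varepsilon)\to\T_g^j\cup\T_g(\varepsilon)$, proving continuity at each step, while you reparameterize the single piecewise flow by the common systole value to write an explicit homotopy $H(x,t)$; these give the same trajectories and the same retraction, and your continuity sketch is the same two-case analysis near stratum boundaries that the paper carries out in detail.
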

\begin{proof}
The deformation retraction is the flow  of the vector field $V$ defined in Lemma \ref{vector-field}.
Though the vector field $V$ is not continuous, it does not cause any problem and the flow
is still continuous. Roughly, for any hyperbolic surface in the thin part $\T_g-\T_g(\varepsilon)$,
we increase the lengths of the systoles at the same rate
until we have reached the thick part $\T_g(\varepsilon)$ or the systolic length equals
to the length of the next shortest geodesic, i.e, the systoles have included more geodesics, 
and then we repeat the above procedure. It will reach the thick part and
stop after finitely many steps.

Specifically, we  deform any marked hyperbolic surface
$(\Sigma_g, [\varphi])$ in the thin part $\T_g-\T_g(\varepsilon)$ as follows.
Let $\ell(\gamma_1)\leq \ell(\gamma_2)\leq \cdots\ell(\gamma_n)\leq \cdots $ be the lengths of  
its simple closed geodesics arranged in the increasing order.

Suppose $\ell(\gamma_1)= \ell(\gamma_2)= \cdots=\ell(\gamma_k) <\ell(\gamma_{k+1})$,
i.e.,  the systoles consist of $\gamma_1, \cdots, \gamma_k$.
This means that $(\Sigma_g, [\varphi])\in \T_{g, c_1, \cdots, c_k}.$
Then we increase the lengths $\ell(\gamma_1), \cdots, \ell(\gamma_k)$ simultaneously
at the same rate, i.e., we keep the deformation path inside $\T_{g, c_1, \cdots, c_k}$.
This can be achieved by the flow 
of the nowhere vanishing vector field $V_{c_1, \cdots, c_k}$ on the submanifold 
$(\T_g-\T_g(\varepsilon))\cap \T_{g, c_1, \cdots, c_k}$
until the length of the systoles reach the next length $\ell(\gamma_{k+1})$ or the value $\varepsilon$,
i.e., the surface has reached a point of the thick part $\T_g(\varepsilon)$.

Suppose that the deformation has not reached the thick part $\T_g(\varepsilon)$ yet. 
At the next step, we have $ \ell(\gamma_1)= \ell(\gamma_2)= \cdots=\ell(\gamma_{k'}) <
\ell(\gamma_{k'+1})$, where $k'\geq k+1$. Since $\ell(\gamma_1)= \ell(\gamma_2)= \cdots=\ell(\gamma_{k'})<\varepsilon$, $\gamma_1, \cdots, \gamma_{k'}$ are disjoint. 
We can deform as above using the vector field $V_{c_1, \cdots, c_{k'}}$ in order to increase
the  length of the systoles at the same rate and stop if the systolic length reaches
the length of the next shortest geodesic or the surface has reached a point
of the thick part.
Since we have reached $\T_g(\varepsilon)$ already whenever $k'> 3g-3$,
this process will terminate after at most $3g-3$ steps.

To show that the deformation retraction is  continuous,  we follow the proof of the continuity
of the well-rounded deformation retraction of lattices in $\R^n$ recalled
in Remark \ref{well-rounded}.

For $j=1, \cdots, 3g-3$, let $\T_{g}^j$ be the subspace of $\T_{g}$ of marked hyperbolic
surfaces whose systoles contain at least $j$ disjoint simple geodesics.
Then $\T_{g}^1=\T_g$. Define $\T_g^{3g-2}=\emptyset$.
For each $j=2, \cdots,  3g-2$,
the above discussion gives a deformation retraction of $\T_g^{j-1}\cup \T_g(\varepsilon)$
to  $\T_g^{j}\cup \T_g(\varepsilon)$,
and  their composition gives the deformation retraction 
to the thick part $\T_g(\varepsilon)$. 
Therefore, it suffices to show that the deformation retraction at each step,
from $\T_g^{j-1}\cup \T_g(\varepsilon)$
to  $\T_g^{j}\cup \T_g(\varepsilon)$, is continuous. 

Fix a $j\in {1, \cdots, 3g-2}$.
Let $(\Sigma_g, [\varphi]), (\Sigma_g', [\varphi]')$ be two points in $\T_g^{j-1}\cup \T_g(\varepsilon)$. Denote their deformed image in $\T_g^j\cup \T_g(\varepsilon)$
by $(\Sigma_g, [\varphi])_t$ and $(\Sigma_g', [\varphi]')_{t'}$. (The subscripts indicate the times
needed for the flow). 
We need to show that when $(\Sigma_g, [\varphi]), (\Sigma_g', [\varphi]')$ are close,
then $(\Sigma_g, [\varphi])_t$,  $(\Sigma_g', [\varphi]')_{t'}$ are also close. 
There are two cases to consider.

\vspace{.05in}
{\em Case} (1): $(\Sigma_g, [\varphi]) \in \T_g^j\cup \T_g(\varepsilon)$. 
Then $(\Sigma_g, [\varphi])_t=(\Sigma_g, [\varphi])$.
If $(\Sigma_g', [\varphi']) \in \T_g^j\cup \T_g(\varepsilon)$,
then $(\Sigma_g', [\varphi'])_{t'}=(\Sigma_g', [\varphi'])$ is close
to $(\Sigma_g, [\varphi])_t$.

Otherwise, $(\Sigma_g', [\varphi']) \in (\T_g^{j-1} -\T_g^j)\cap \T_g-\T_g(\varepsilon)$,
and the systoles of $(\Sigma_g', [\varphi'])$ consists of $j-1$ geodesics.
Since $(\Sigma_g', [\varphi]')$ is close to $(\Sigma_g, [\varphi])$, and the systoles
of $(\Sigma_g, [\varphi])$ consists of at least $j$ geodesics, it imples that
the  length of $j$th shortest geodesic of $(\Sigma_g', [\varphi]')$ is close to its systolic
length, and it takes a small deformation for $(\Sigma_g', [\varphi'])_{t'}$ to reach a point
in $\T_g^j\cup \T_g(\varepsilon)$. (Here we have used the fact that 
by Lemma \ref{bounded},
  each vector field $V_{c_1, \cdots, c_k}$ is continuous and its norm is uniformly
bounded from both below and above.)
Therefore, 
$(\Sigma_g, [\varphi])_t$,  $(\Sigma_g', [\varphi]')_{t'}$ are also close. 

\vspace{.05in}
{\em
Case} (2): $(\Sigma_g, [\varphi]) \in (\T_g^{j-1} -\T_g^j)\cap \T_g-\T_g(\varepsilon)$.
Then the systoles of $(\Sigma_g, [\varphi])$ consist of $j-1$ geodesics,
$\gamma_1, \cdots, \gamma_{j-1}$, which correspond to simple closed
curves $c_1, \cdots, c_{j-1}$ of the base surface $S_g$.
We claim that when $(\Sigma_g', [\varphi'])$ is sufficiently close to
$(\Sigma_g, [\varphi])$, then the systoles of $(\Sigma_g', [\varphi'])$ also consist
of $j-1$ geodesics $\gamma_1', \cdots, \gamma_{j-1}'$
 which correspond to the same set of simple closed curves
$c_1, \cdots, c_{j-1}$ of the base surface $S_g$. 
To prove the claim, we note that there exists a positive number $\delta$ such
that for every simple closed geodesic $\gamma$ of
$(\Sigma_g, [\varphi])$ different from $\gamma_1, \cdots, \gamma_{j-1}$,
$\ell(\gamma)\geq \ell(\gamma_1)+\delta$.
This implies that when $(\Sigma_g', [\varphi'])$ is sufficiently close to
$(\Sigma_g, [\varphi])$, only geodesics of $(\Sigma_g', [\varphi'])$ correspond to
the simple closed curves $c_1, \cdots, c_{j-1}$ on the base surface $S_g$ can be systoles. 
Since $(\Sigma_g', [\varphi'])\in \T_g^{j-1}$, it must have at least $j-1$ systoles.
This implies that it has exactly $j-1$ systoles and the claim is proved.

By the claim, $(\Sigma_g, [\varphi])$, $(\Sigma_g', [\varphi'])$ belong
to the same submanifold $\T_{g, c_1, \cdots, c_k}$. 
Then under the flow defined by the vector field $V_{c_1, \cdots, c_k}$, 
they reach their deformation points $(\Sigma_g, [\varphi])_t$, $(\Sigma_g', [\varphi'])_{t'}$
in $\T_g^j\cup T_g(\varepsilon)$.
We note that by Lemma \ref{bounded},
  each vector field $V_{c_1, \cdots, c_k}$ is continuous and its norm is uniformly
bounded from both below and above, and hence the time it takes to move any  point
of $(\T_g^{j-1}-\T_g(\varepsilon))\cap \T_{g, c_1, \cdots, c_k}$
to  $(\T_g^{j}\cup\T_g(\varepsilon))$
is also uniformly bounded in terms of its distance to  $(\T_g^{j}\cup\T_g(\varepsilon))$
and depends continuously on the initial point.
Therefore, when $(\Sigma_g, [\varphi])$, $(\Sigma_g', [\varphi'])$ are close,
$(\Sigma_g, [\varphi])_t$, $(\Sigma_g', [\varphi'])_{t'}$ are also close,
and the continuity of the deformation retraction of $\T_g^{j-1}$ to $\T_g^j$ is proved. 

Since the flow at every step and hence the whole flow from $\T_g$ to $\T_g(\varepsilon)$
is intrinsically defined and hence equivariant with respect to $\Mod_g$, 
this completes the proof of Theorem \ref{can-def}.
\end{proof}

\section{Deformation of  Teichm\"uller space to a spine of positive codimension}

Though the thick part $\T_g(\varepsilon)$ of $\T_g$ gives a cofinite model of the
universal space 
$\underline{E}\ga$ for $\ga=\Mod_g$, it is a subspace of $\T_g$ of codimension 0.

It is tempting to conjecture that $\T_g$ admits  a $\Mod_g$-equivariant 
deformation retraction to a subspace of dimension equal to $4g-5$,
which is equal to the virtual cohomological dimension of $\Mod_g$.
 
One modest step towards this is to construct subspaces of $\T_g$ which are
of positive codimension and equivariant deformation retracts of $\T_g$
to them.
As mentioned in the introduction,
such an attempt was first made in \cite{th}. 
In this section, we continue the flow in the previous section and
deform $\T_g(\varepsilon)$ (or rather $\T_g$) to a subspace of positive codimension. 

For any marked hyperbolic surface $(\Sigma_g, [\varphi])$, arrange
its lengths of simple closed geodesics in the increasing order:
$$\ell(\gamma_1)\leq \ell(\gamma_2) \leq \cdots \leq \ell(\gamma_n)\leq \cdots.$$

Define a  {\em well-rounded subspace }
$S\subset \T_g$ to consist of marked hyperbolic surfaces $(\Sigma_g, [\varphi])$
satisfying the conditions: 
\begin{enumerate}
\item $\ell(\gamma_1)=\cdots=\ell(\gamma_k)<\ell(\gamma_{k+1})$ for some $k\geq 2$.
\item some pairs of geodesics from $\gamma_1, \cdots, \gamma_k$, i.e., some pairs
of systoles of $(\Sigma_g, [\varphi])$, intersect each other.
\end{enumerate}

\begin{prop}
The well-rounded subspace $S$ is stable under $\Mod_g$ with a compact quotient,
and the codimension of $S$ in $\T_g$ is positive.
Furthermore, $S$ is a sub-analytic subspace and hence admits a $\Mod_g$-equivariant
triangulation such that the quotient $\Mod_g\backslash S$ is a finite CW-complex. 
\end{prop}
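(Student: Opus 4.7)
My plan is to address the four claims in sequence: $\Mod_g$-invariance, compactness of the quotient, positive codimension, and sub-analytic triangulability.

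First, invariance of $S$ under $\Mod_g$ is immediate from the definition, which uses only intrinsic metric data (lengths and intersection patterns of simple closed geodesics) on the hyperbolic surface, with no reference to the marking. Next, by the collar theorem used at the start of Section 3, any two simple closed geodesics of length $\leq \varepsilon_0$ are disjoint; hence the systoles at any point of $S$ (which contain an intersecting pair) all have length $> \varepsilon_0$, so $S \subset \T_g(\varepsilon_0)$. To prove $\Mod_g \backslash S$ is compact it suffices to show $S$ is closed in $\T_g(\varepsilon_0)$. Given a convergent sequence $p_n \to p$ with $p_n \in S$, each $p_n$ carries an intersecting pair of systoles corresponding to a pair of simple closed curves $(c_a^n, c_b^n)$ on $S_g$; by Mumford finiteness, only finitely many homotopy classes of simple closed curves on $S_g$ achieve length bounded by any fixed constant on a compact set in $\T_g$, so after passing to a subsequence we may assume $(c_a^n, c_b^n) = (c_a, c_b)$ is constant. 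Continuity of length functions, together with lower semicontinuity of the systolic length, then forces $c_a, c_b$ to provide an intersecting pair of systoles at $p$, so $p \in S$. Combined with Proposition \ref{truncated-subspace}, this gives compactness of the quotient.

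For positive codimension, fix $p = (\Sigma_g, [\varphi]) \in S$ with systoles $\gamma_1, \ldots, \gamma_k$ containing an intersecting pair. By continuity of length functions, on a sufficiently small neighborhood $U$ of $p$ the systoles at each point of $U$ are a subset of $\{\gamma_1, \ldots, \gamma_k\}$. Consequently a point $q \in U$ lies in $S$ only if some intersecting pair $(\gamma_a, \gamma_b)$ from this list satisfies $\ell_q(\gamma_a) = \ell_q(\gamma_b)$. Hence $U \cap S$ is contained in a finite union of zero sets of functions $\ell(\gamma_a) - \ell(\gamma_b)$. Each such function is a nontrivial real-analytic function on $\T_g$: since $\gamma_a, \gamma_b$ intersect they cannot lie in a common pants decomposition, so one can pick Fenchel--Nielsen coordinates relative to a pants decomposition containing $\gamma_a$ but not $\gamma_b$, in which $\ell(\gamma_a)$ is a coordinate while $\ell(\gamma_b)$ depends nontrivially on the other coordinates. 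Each zero set is therefore a proper real-analytic subvariety of codimension at least one, and so is their finite union; hence $S$ has positive codimension in $\T_g$.

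Sub-analyticity follows from the same local analysis: near $p$ only the finitely many length functions $\ell(\gamma_1), \ldots, \ell(\gamma_k), \ell(\gamma_{k+1})$ enter the description of $S \cap U$, which is then a finite Boolean combination of real-analytic equalities and strict inequalities, i.e., semianalytic. Globally $S$ is sub-analytic in $\T_g$. Since $\Mod_g$ acts properly and by real-analytic diffeomorphisms preserving the length functions, a standard equivariant triangulation theorem for sub-analytic sets (Hironaka--Hardt) yields a $\Mod_g$-equivariant triangulation of $S$; equivalently, triangulate the compact sub-analytic set $\Mod_g \backslash S$ inside the orbifold $\Mod_g \backslash \T_g(\varepsilon_0)$ and lift. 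The main obstacle I expect is the closedness of $S$ in Step 2, since the systole of the limit surface may strictly contain the limit of the systoles, and one must verify that the ``witnessing'' intersecting pair persists; this is exactly where Mumford finiteness enters. The equivariant triangulation step, while classical, must also be handled carefully so that the refinement respects the finite stabilizers of the $\Mod_g$-action in order to produce a genuine CW-structure on the quotient.
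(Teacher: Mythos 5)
Your proof is correct and follows essentially the same route as the paper: $\Mod_g$-invariance from the intrinsic definition, the collar theorem plus Mumford compactness for the cocompactness, local defining equations $\ell(\gamma_a)=\ell(\gamma_b)$ for positive codimension, and the standard sub-analytic equivariant triangulation theorem. Your treatment is somewhat more careful than the paper's at two points worth noting: you explicitly prove that $S$ is closed in $\T_g(\varepsilon_0)$ (a step the paper passes over when invoking Mumford, though it is needed for the quotient to be compact), and you explicitly verify that the functions $\ell(\gamma_a)-\ell(\gamma_b)$ are nonconstant so that their zero sets are genuine proper subvarieties.
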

\begin{proof}
It is clear that $S$ is stable under $\Mod_g$ since it is defined in terms of the lengths of closed
geodesics of hyperbolic surfaces in $\T_g$. 
For any hyperbolic surface in $S$, since two of the shortest geodesics intersect,
by the collar theorem, their length is uniformly bounded from below by a constant
which depends only on $g$. 
Then by the Mumford compactness criterion \cite{mu}, the quotient $\Mod_g\backslash S$ is compact.

Near any point in $S$, $S$ is locally defined by at least one real analytic equation,
$\ell(\gamma_1)=\ell(\gamma_2), \cdots, \ell(\gamma_{k-1})=\ell(\gamma_k)$.
This implies that $S$ is of positive codimension. 

Since the geodesic length functions are real analytic, $S$ is a sub-analytic space.
The existence of equivariant triangulation of $S$ follows from a general result on existence
of equivariant triangulation.  
\end{proof}

The second result of this paper is to show that $S$ is an equivariant
deformation retraction of $\T_g$.

\begin{thm}\label{spine}
The well-rounded subspace $S$ is a cofinite, equivariant spine of $\T_g$ of positive codimension
with respect to $\Mod_g$.
\end{thm}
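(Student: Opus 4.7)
The plan is to continue the well-rounded flow of Theorem \ref{can-def} beyond the thick part $\T_g(\varepsilon)$ until the deforming surface reaches $S$. The key observation making this possible is that the stratification of Proposition \ref{decomposition} and the vector fields of Lemma \ref{bounded} and Lemma \ref{vector-field} are meaningful on all of $\T_g\setminus S$, not only on the thin part. Indeed, the systoles of any marked surface outside $S$ are pairwise disjoint (by the collar theorem when they are short, and by the negation of condition (ii) in the definition of $S$ otherwise), so $\T_g\setminus S$ admits a $\Mod_g$-equivariant disjoint decomposition into strata $\T_{g,c_1,\ldots,c_k}\cap(\T_g\setminus S)$ indexed by tuples of pairwise disjoint simple closed curves on $S_g$. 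Wolpert's positivity estimate $\langle\nabla\ell_i,\nabla\ell_j\rangle>0$ for disjoint geodesics holds without any smallness assumption on the lengths, so $V_{c_1,\ldots,c_k}=\nabla\ell^{1/2}$, restricted to the diagonal submanifold $\T_{g,c_1,\ldots,c_k}$, remains a continuous, nonvanishing vector field on each stratum.

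I would then flow along the piecewise vector field $V$ obtained by gluing the $V_{c_1,\ldots,c_k}$. Starting from $p\in\T_g\setminus S$ in a stratum with $k$ systoles, the flow raises the common systolic length $\ell$ until it catches up with the length $\ell(\gamma_{k+1})$ of the next shortest simple closed geodesic. At that instant $\gamma_{k+1}$ joins the systoles: if it intersects one of $\gamma_1,\ldots,\gamma_k$ the flow has reached $S$ and stops; otherwise the systole set becomes a strictly larger collection of pairwise disjoint curves and I iterate inside the enlarged stratum $\T_{g,c_1,\ldots,c_{k+1}}$. Since the maximum size of a disjoint system of simple closed curves on $S_g$ is $3g-3$ by Proposition \ref{decomposition}, the procedure terminates after at most $3g-3$ stages: once the systoles form a pants decomposition any additional simple geodesic must cross one of them, forcing the final stage to land in $S$.

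The main obstacle is verifying that the resulting deformation is continuous, since $V$ is only piecewise continuous across the stratum boundaries. I would adapt the filtration scheme used in the proof of Theorem \ref{can-def} and in Remark \ref{well-rounded}: stratify $\T_g$ by setting $W_j$ to be the set of marked surfaces whose systoles contain at least $j$ pairwise disjoint geodesics, together with $S$, so that $\T_g=W_1\supseteq W_2\supseteq\cdots\supseteq W_{3g-3}\supseteq S$, and check continuity at each successive retraction $W_{j-1}\to W_j$ by the same two-case analysis. The key input is that near a surface with exactly $j-1$ disjoint systoles there is a positive continuous length gap between the systolic value and the next-shortest geodesic length, so nearby surfaces either share the same systole combinatorics (and flow together inside a common stratum) or have strictly more systoles (and require only a short flow time to reach $W_j$); within each stratum $V_{c_1,\ldots,c_k}$ is continuous, and the needed local uniform bounds on flow time are controlled by Lemma \ref{bounded} near $\ell=0$ and by Mumford compactness of the relevant portion of moduli space for moderate $\ell$. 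Finally, because the entire construction is formulated in terms of geodesic length functions and the Weil-Petersson metric, it is automatically $\Mod_g$-equivariant, and combined with the preceding proposition, which supplies cofiniteness and positive codimension of $S$, this exhibits $S$ as a cofinite equivariant spine of $\T_g$ of positive codimension.
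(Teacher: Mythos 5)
Your proposal takes essentially the same route as the paper: extend the stratification of Proposition \ref{decomposition} and the vector fields $V_{c_1,\ldots,c_k}=\nabla\ell^{1/2}$ to all of $\T_g\setminus S$ (using that systoles outside $S$ are pairwise disjoint), flow until new systoles appear, terminate in at most $3g-3$ stages, and verify continuity by the same filtration scheme as in Theorem \ref{can-def}. The paper's own proof is terser --- it simply notes the disjointness forces $k\le 3g-3$, that $\nabla\ell^{1/2}$ remains bounded away from $0$ and $\infty$ because the systole stays in a bounded interval $(0,a]$, and then says ``the same proof of Theorem \ref{can-def} works by replacing $\T_g(\varepsilon)$ by $S$'' --- so your write-up fills in the same skeleton with more explicit justification (the observation that Wolpert's positivity $\langle\nabla\ell_i,\nabla\ell_j\rangle>0$ needs no smallness hypothesis, and the use of Mumford compactness for uniform flow-time bounds at moderate $\ell$) rather than proposing a different argument.
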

\begin{proof}
For any collection of simple closed curves $c_1, \cdots, c_k$
of the base surface $S_g$, by the same proof of Proposition \ref{decomposition},
we can show that  the subspace
$\T_{g, c_1, \cdots, c_k}\cap (\T_g-S)$ associated with it is a smooth submanifold, 
and  $\T_g-S$ admits a $\Mod_g$-equivariant decomposition into disjoint
submanifolds $\T_{g, c_1, \cdots, c_k}\cap (\T_g-S)$ as  in the case of $\T_g-\T_g(\varepsilon)$.  

We also note that as in Lemma \ref{bounded}
the disjointness of the simple closed curves $c_1, \cdots,
c_k$ implies that $k\leq 3g-3$,
and the vector field $V_{c_1, \cdots, c_k}=\nabla \ell^{1/2}$
is defined on $\T_{g, c_1, \cdots, c_k}\cap (\T_g-S)$ 
and is continuous and its norm is bounded away from 0 and from above.
To prove this, we note that the norm of $\nabla \ell^{1/2}$ is  uniformly
bounded away from zero and the above when  $\ell \in (0, a]$, where $a$ is any positive
constant. The condition $\ell\in (0, a]$ for some $a>0$ is satisfied
since $\ell$ is the systole of the hyperbolic surface.

Then the same proof of Theorem \ref{can-def} works
by replacing $\T_g(\varepsilon)$ by $S$,  and Theorem \ref{spine} can be proved.
\end{proof}

 One natural question is whether the spine $S$ in Theorem \ref{spine} can be further deformation retracted to a subspace of smaller dimension.
If $S$ were a smooth manifold, then the above flow might be continued. 
In general $S$ should be a singular subspace.

Define two subspaces of $S$ by
$$S'=\{(\Sigma_g, [\varphi])\in S\mid \text{\  there are exactly two systoles \ } \gamma_1, \gamma_2 \},\  \text{ and\ \  } S''=S-S'.$$
It is clear that each hyperbolic surface in $S''$ {\em contains at least three systoles},
and hence $S''$ is a real analytic subspace of $\T_g$ of codimension at least 2.

Next we outline arguments  from Wolpert and Parlier which prove the next result. 

\begin{prop}\label{higher}
The subapce $S'$ is a smooth submanifold, and  $\nabla \ell$ is a nowhere vanishing vector
field on $S'$ such that its flow defines a deformation retraction of $S$ to $S''$.
Therefore, $S''$ is an equivariant deformation retract of $\T_g$ with codimension at least 2. 
\end{prop}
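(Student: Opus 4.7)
The plan is to mimic the proof of Theorem \ref{can-def}, this time deforming $S$ onto $S''$. The three ingredients I need are: (i) $S'$ is a smooth submanifold of $\T_g$ on which $\ell:=\ell(\gamma_1)=\ell(\gamma_2)$ is a well-defined real-analytic function; (ii) the induced Weil--Petersson gradient $\nabla\ell$ is nowhere zero on $S'$; and (iii) the flow of (a suitable rescaling of) $\nabla\ell$ carries every point of $S'$ into $S''$ in finite time and depends continuously on the initial point.

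For (i), at any $p\in S'$ only $\gamma_1,\gamma_2$ achieve the systolic length, so $S'$ is locally defined near $p$ by the single equation $\ell(\gamma_1)=\ell(\gamma_2)$. The differentials $d\ell(\gamma_1)$ and $d\ell(\gamma_2)$ are linearly independent at $p$: a Fenchel--Nielsen twist along $\gamma_2$ fixes $\ell(\gamma_2)$ but, by Wolpert's twist--length formula, changes $\ell(\gamma_1)$, with the derivative given by a nonzero sum over the intersection points $\gamma_1\cap\gamma_2\neq\emptyset$. Hence $S'$ is a real-analytic codimension-one submanifold of $\T_g$. For (ii), the restricted gradient of $\ell$ on $S'$ equals the Weil--Petersson orthogonal projection of $\tfrac12\bigl(\nabla\ell(\gamma_1)+\nabla\ell(\gamma_2)\bigr)$ onto $T_pS'=\ker\bigl(d\ell(\gamma_1)-d\ell(\gamma_2)\bigr)$. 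This projection vanishes exactly when $\nabla\ell(\gamma_1)+\nabla\ell(\gamma_2)$ is a scalar multiple of the Weil--Petersson normal $\nabla\ell(\gamma_1)-\nabla\ell(\gamma_2)$, which forces $\nabla\ell(\gamma_1)$ and $\nabla\ell(\gamma_2)$ to be linearly dependent, and hence, by non-degeneracy of the metric, $d\ell(\gamma_1)$ and $d\ell(\gamma_2)$ to be linearly dependent, contradicting step (i).

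For (iii), the systole length $\ell$ is $\Mod_g$-invariant and continuous on $S$, hence takes values in a compact interval $[\ell_{\min},L]\subset(0,\infty)$ on the compact quotient $\Mod_g\backslash S$. Along the flow, $\ell$ strictly increases, and the Wolpert estimates used in Lemma \ref{bounded} adapt to show that $\nabla\ell$ is continuous with norm bounded uniformly above and away from zero on $S'$. Consequently every trajectory must exit $S'$ in finite time, and the only way it can exit while remaining in $S$ is for a third simple closed geodesic to join $\gamma_1,\gamma_2$ as a systole, landing the trajectory in $S''$. Continuity of the endpoint map, including the handling of initial points already in $S''$ or in adjacent strata, is then the same two-case argument given at the end of the proof of Theorem \ref{can-def}. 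Composing with the retraction of Theorem \ref{spine} yields the asserted $\Mod_g$-equivariant deformation retraction of $\T_g$ onto $S''$, and the codimension is at least two because $S''$ satisfies two independent equations $\ell(\gamma_1)=\ell(\gamma_2)=\ell(\gamma_3)$, the independence again coming from twist deformations along $\gamma_2$ and $\gamma_3$.

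I expect the main technical obstacle to be the uniform lower bound on $\|\nabla\ell\|$ on $S'$ as one approaches the frontier with $S''$: near $S''$, additional length functions coalesce with $\ell(\gamma_1)$, so the natural intrinsic gradient changes stratum and the projection formula above is no longer valid in the limit. However, the two specific gradients $\nabla\ell(\gamma_1)$ and $\nabla\ell(\gamma_2)$ vary continuously and remain linearly independent up to the boundary, so the projection formula used in step (ii) gives a continuous nonvanishing vector field along any single trajectory, which is all that is actually needed to control the finite exit time.
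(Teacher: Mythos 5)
The key difficulty with your argument is in step (i), and it propagates to step (ii). You claim that $d\ell(\gamma_1)$ and $d\ell(\gamma_2)$ are linearly independent at $p\in S'$ because a Fenchel--Nielsen twist along $\gamma_2$ leaves $\ell(\gamma_2)$ fixed while changing $\ell(\gamma_1)$ at a ``nonzero'' rate. But by Wolpert's twist--length cosine formula, that derivative is $\sum_{q\in\gamma_1\cap\gamma_2}\cos\theta_q$, and two systoles intersect in exactly one point, so the derivative is $\cos\theta$ for a single angle $\theta$. Nothing prevents $\theta=\pi/2$, in which case the first-order twist derivative vanishes (only strict convexity in the twist is guaranteed, not a nonzero first derivative at the given point). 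So the twist argument does not establish linear independence of $d\ell(\gamma_1)$ and $d\ell(\gamma_2)$, and your step (ii) --- the orthogonal-projection computation showing $\nabla\ell\neq 0$ on $S'$ --- collapses, since it is deduced precisely from that linear independence.

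The paper avoids this by a different pair of inputs, and your proposal misses the second one entirely. First, for the submanifold statement the paper needs only the weaker fact $\nabla\ell(\gamma_1)-\nabla\ell(\gamma_2)\neq 0$ for two geodesics meeting in one point, which it takes from McShane--Parlier (via Thurston stretch maps, one increases $\ell(\gamma_1)$ strictly faster than $\ell(\gamma_2)$ in a suitable direction). Second --- and this is the genuinely geometric step that your proposal never uses --- the paper observes that two intersecting systoles $\gamma_1,\gamma_2$ cannot fill $\Sigma_g$: since they intersect once, a regular neighborhood of $\gamma_1\cup\gamma_2$ is a one-holed torus, so the complement is a genus $g-1$ surface with one boundary component and contains a simple closed geodesic $\delta$ disjoint from both. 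Wolpert's Lemma 3.12 then gives $\langle\nabla\ell(\gamma_1),\nabla\ell(\delta)\rangle>0$ and $\langle\nabla\ell(\gamma_2),\nabla\ell(\delta)\rangle>0$, which the paper uses to conclude $\nabla\ell\neq 0$ on $S'$. Without the non-filling observation and the auxiliary disjoint geodesic $\delta$, you do not have a handle on why the restricted gradient is nonzero. (The same issue appears at the end of your proposal, where independence of the two defining equations of $S''$ is again asserted by twist derivatives; the paper only asserts that $S''$ is of codimension at least $2$, not that it is a submanifold, precisely because it does not have such independence in hand.) I would suggest replacing your step (i)--(ii) with the non-filling argument; the remainder of your proposal (uniform bounds on $\|\nabla\ell\|$ via compactness of $\Mod_g\backslash S$, finite exit time, the two-case continuity argument) does track the paper's structure.
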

\begin{proof}
Briefly,  we note that
for each hyperbolic surface in $S'$, the two systoles $\gamma_1, \gamma_2$ 
intersect at one point. Using the fact that the difference of gradients of the length functions
of two geodesics intersecting at a single point is
never zero, in particular $\nabla \ell(\gamma_1) -  
\nabla \ell(\gamma_2)\neq 0$, 
 we conclude that $S'$ is a smooth submanifold of $\T_g$.
(See \cite[Lemma 4 in \S 8]{map}. Briefly, Thurston stretch map allows one to increase the length $\ell(\gamma_1)$
at a strictly greater rate than for the length $\ell(\gamma_2)$ in a suitable direction, since the maximal stretched set is a geodesic
lamination and can be chosen to be a complete geodesic lamination which contains $\gamma_1$ and hence not $\gamma_2$.) 

Let $\ell=\ell(\gamma_1)=\ell(\gamma_2)$ be the systole function on 
$S'$. Let $\nabla \ell$ be the gradient of $\ell$ on $S'$ with respect to the restriction
of the Weil-Petersson metric of $\T_g$.  If $\nabla \ell$ is not zero,
then $\nabla \ell$ is the direction along which both $\ell(\gamma_1)$ and  $\ell(\gamma_2)$ are increased at the maximal rate while the equality 
$\ell(\gamma_1)=\ell(\gamma_2)$ is preserved. 

Since $\gamma_1, \gamma_2$ are two intersecting systoles, 
it can  be shown that $\gamma_1, \gamma_2$  do not fill $\Sigma_g$. 
For example, when $g=2$, then the complement $\Sigma_g-\gamma_1 -\gamma_2$ is  a one holed torus. For $g\geq 2$, the complement
$\Sigma_g-\gamma_1 -\gamma_2$ is a genus $(g-1)$-surface
with one boundary component.

Let $\delta$ be a  simple closed geodesic which is disjoint from $\gamma_1, \gamma_2$.
By \cite[Lemma 3.12]{wo3}, a deformation in $\T_g$ along 
the direction of $\nabla \ell (\delta)$
will increase both $\ell(\gamma_1)$ and  $\ell(\gamma_2)$.
This implies that $\nabla \ell$ is nonzero.
Then the proof of Theorem \ref{spine}  (or Theorem \ref{can-def}) can be repeated to show
that the flow  of  $\nabla \ell$ defines
a deformation retraction of $S$ to $S''$. 
Therefore $\T_g$ admits an equivariant deformation retract $S''$ of codimension at least 2. 
\end{proof}

It seems very difficult that this deformation retract  $S''$ can
be pushed further to construct a spine of $\T_g$ with higher codimension. 
For example, it is not clear whether the subspace $S'''$ of $S''$ consisting of hyperbolic 
surfaces with {\em exact three systoles} is a smooth submanifold of $\T_g$. 
If yes, then $S''$ can be deformed as above to the subspace $S''-S'''$ of higher codimension, which {\em contains surfaces with
at least 4 systoles.}
We note that when $g=2$, $S''-S'''$ is of the optimal dimension, i.e., the virtual cohomological dimension
of $\Mod_g$, which is equal to 3.

\begin{rem}\label{thurston}
{\em The results of \cite{th} can be summarized as follows. 
\begin{enumerate}
\item Let $P$ be the subspace of $\T_g$  consisting of hyperbolic surfaces whose systoles fill the surfaces.
This is the spine proposed in \cite{th}.
\item  $P$ is a real analytic subspace of $\T_g$ and admits a triangulation, and hence $P$ is a deformation 
retraction of a regular neighborhood.
\item Thurston constructed an isotopy
$\phi_t$, $t>0$, such that for any neighborhood of $P$ and any compact subset $K$ of $\T_g$, 
there exists a $t$ for which $\phi_t(K)$ is contained in the neighborhood of $P$.
\end{enumerate}

In constructing the isotopy $\phi_t$, the key result is  \cite[0.1. Proposition: expanding subsets]{th}:
{\em Let $\Gamma$ be any collection of simple closed curves on a surface which do not fill the surface.
Then there are tangent vectors to Teichm\"uller space which simultaneously increase the lengths of the geodesics representing curves in $\ga$.}

According to \cite{th}, this is ``The only slightly original observation concerning the geometry of surfaces"
in the paper.
This was proved as follows:
(1) First cutting the hyperbolic surface along  geodesics in $\Gamma$ to obtain 
hyperbolic surfaces with boundary.  (2) Extend the surfaces to complete hyperbolic surfaces of infinite area.
(3) For any geodesic in the completed surface, cut the surface along it and glue in a strip.
Use the new expanded surface to obtain an expanded surface of the original surface. 
After this operation on several geodesics, the lengths of geodesics in $\ga$ have all been increased,
due to the assumption that $\ga$ does not fill the surface. 
Besides \cite{th}, a description of this is also given in \cite[pp. 173-174]{ha1}. 

This result was used to construct  vector fields that flow points of $\T_g$ into regular neighborhoods $P_\varepsilon$ of $P$,
which is defined to be the subset of $\T_g$ consisting of hyperbolic surfaces such that the set of simple closed geodesics
whose lengths are within $\varepsilon$ of the shortest length fill the surface.
More specifically, 
\begin{enumerate}
\item For every collection $\ga$ of simple closed geodesics which do not fill a hyperbolic surface, choose
a local  vector field along which the lengths of the geodesics in $\ga$ are all increased.
\item For every small  positive  constant $\varepsilon$,
construct a covering of $\T_g$ parametrized by collections $\ga$ of simple closed geodesics and define
a vector field on each such open subset.
If $\ga$ does not fill, use the vector field constructed in (1); if $\ga$ does, take the zero vector field.
\item Use a partition of unity defined via lengths of geodesics subordinate to the covering in (2),
and construct a global vector $X_\varepsilon$ on  $\T_g$ using the local vector fields in (2).
This vector field is zero on $P_\varepsilon$ and does not vanish on the complement $P_{B \varepsilon}$,
 where $B$ is a constant  greater than 1 and depending only on $g$.
\item Use the flow defined by the vector field $X_\varepsilon$ to deform points of $\T_g$ into a
neighborhood $P_{B\varepsilon}$ of $P$. 

\end{enumerate}
 
There seems to be some  problems with the results in \cite{th}.
The first serious one is that the vector field $X_\varepsilon$ defined in (3) may not deformation retract all the complement
$\T_g - P_{ B \varepsilon}$ into $P_{ B \varepsilon}$ in a uniform time, i.e., Step (4) might pose a problem.
Certainly there is no problem to deform any compact subset $K$ of $T_g -P_{ B \varepsilon}$
 into $P_{ B \varepsilon}$ in a fixed time, but we need to
deform the whole space.
Consider the example of the closed unit disk $D$ in $\R^2$ and a vector field $V(x)=f(x) e_1$ on $D$,
where $e_1=(1, 0)$ and $f(x)$ is a nonnegative function on $D$ which vanishes only on the unit circle.
Clearly, for every point $p$  in the interior of $D$,
the flow of $V$ will deform $p$ into any given small neighborhood of the unit circle at a finite time.
The same thing holds for every compact subset $K$ of the interior of $D$. 
But we know that the unit disk $D$ cannot be deformation retracted to the unit circle. 
This implies that based on the properties of the vector field $X_\varepsilon$ in (3),
it is {\em not necessarily true} that the flow of $X_\varepsilon$  will deform $\T_g$ into the
neighborhood $P_{B\varepsilon}$. If it can be shown that points in $P_{B\varepsilon}$ cannot be flowed
out of $P_{B\varepsilon}$, then it will be fine and Step (4) is valid. 
In summary, for this method in \cite{th} to succeed,  we need  vector fields whose flows increase 
the number of geodesics whose lengths are close to the systole of the surface, 
rather than only increasing their lengths simultaneously.

The second, nonserious problem is that when the above result is applied to these nonfilling systoles,
 their lengths are indeed increased, but it is not clear
if they have the same length.  This is the reason that the flow can only deform points of $\T_g$
into a small regular neighborhood of $P$.
For example, assume that  $\Gamma$ consists of systoles
$\gamma_1, \cdots, \gamma_k$, and the next shortest geodesic is $\gamma_{k+1}$. 
Assume further that $\Gamma$ does not fill, but $\Gamma\cup \{\gamma_{k+1}\}$ does fill. 
Then it is not clear if the above deformation will reach $P$. 
There are two alternatives to solve the second problem:
\begin{enumerate}
\item Using triangulations of $P$ and a regular neighborhood of $P$, 
the regular neighborhood of $P$ can be deformed into  $P$.
This is not canonical but depends on  the triangulations of $P$ and its
regular neighborhood.
\item Take a sequence of $\varepsilon_i$  with $\lim_{i\to \infty} \varepsilon_i=0$
 and compose their associated deformations of $\T_g$ into small
neighborhoods of $P_{B \varepsilon_i}$. 
In the limit, the deformation will reach $P$
under the assumption that the deformation retraction of $\T_g$ to $P_{B \varepsilon_i}$ works.
\end{enumerate}

It might be helpful to note that \cite[Lemma 3.12]{wo3}
implies the following result:
{\em If a collection of systoles $\gamma_1, \cdots, \gamma_k$ of a marked
hyperbolic surface $\Sigma_g$ is not filling, then a deformation
in $\T_g$ along the direction of
$\nabla \ell(\delta)$, the gradient of the length of a disjoint simple closed
geodesic $\delta$ of $\Sigma_g$,  increases simultaneously
lengths of all the systoles $\gamma_1, \cdots, \gamma_k$.}

This gives a different proof of \cite[0.1. Proposition]{th} recalled above.
But this probably does not overcome the  problem as pointed out above: it does not obviously  lead a good direction to
increase the lengths of the systoles $\gamma_1, \cdots, \gamma_k$ and also to  make then closer in some sense. 

}
\end{rem}





\begin{thebibliography}{WX2}

\bibitem[As1]{as1}A.Ash, {\em Small-dimensional classifying spaces for arithmetic subgroups 
of general linear groups}, Duke Math. J. 51 (1984), no. 2, 459--468.

\bibitem[As2]{as2}A.Ash, {\em Deformation retracts with lowest possible dimension of arithmetic quotients of self-adjoint homogeneous cones}, Math. Ann. 225 (1977), no. 1, 69--76.




\bibitem[BoE]{boe}B.Bowditch, D.Epstein, {\em Natural triangulations associated to a surface},  
Topology 27 (1988) 91--117. 


\bibitem[BV]{bv}M.Bridson, K.Vogtmann,  {\em Automorphism groups of free groups,
 surface groups and free abelian groups.} in {\em  Problems on mapping class groups
 and related topics}, 301–-316, Proc. Sympos. Pure Math., 74, Amer. Math. Soc., Providence, RI, 2006.


\bibitem[CV]{cv}M.Culler,  K.Vogtmann, {\em Moduli of graphs and automorphisms of free groups},  Invent. Math. 84 (1986), no. 1, 91--119. 


\bibitem[Ha]{ha}J.Harer, {\em The virtual cohomological dimension of the mapping class group of an orientable surface}, 
Invent. Math. 84 (1986), no. 1, 157--176. 


\bibitem[Ha1]{ha1}J.Harer, {\em  The cohomology of the moduli space of curves}. 
in {\em Theory of moduli} (Montecatini Terme, 1985), 138--221, Lecture Notes in Math., 1337, Springer, Berlin, 1988.


\bibitem[JW]{jw}L.Ji, S.Wolpert, {\em A cofinite universal space for proper actions for mapping class groups},  {\em In the tradition of Ahlfors-Bers. V},  pp.
151–163, Contemp. Math., 510, Amer. Math. Soc., Providence, RI, 2010.


\bibitem[Ke]{ke}S.Kerckhoff, {\em The Nielsen realization problem},
 Ann. of Math. 117 (1983), 235--265. 


\bibitem[Ko]{ko}M.Kontsevich, {\em Intersection theory on the moduli space of curves
 and the matrix Airy function}, Comm. Math. Phys. 147 (1992), 1--23.


\bibitem[L\"u]{lu}W.L\"{u}ck, {\em Survey on classifying spaces for families of subgroups},
in {\em Infinite groups: geometric, combinatorial and dynamical aspects}, 
pp. 269--322, Progr. Math., 248, Birkh\"{a}user, Basel, 2005. 



\bibitem[Mil]{mil}J.Milnor,  {\em 
Construction of universal bundles. II.} 
Ann. of Math. (2) 63 (1956), 430--436. 

\bibitem[MaP]{map}G.McShane, H.Parlier, 
Multiplicities of simple closed geodesics and hypersurfaces in Teichm\"uller space. 
Geom. Topol. 12 (2008), no. 4, 1883--1919. 


 
 \bibitem[Mu]{mu}D.Mumford, {\em  A remark on Mahler's compactness theorem}, 
Proc. Amer. Math. Soc. 28 (1971) 289--294.



\bibitem[Pe2]{pe2}R.Penner, {\em The decorated Teichm\"{u}ller space of puctured surfaces}, Communications in Mathematical Physics 113 (1987) 299--339. 




 
\bibitem[Sch]{sch}P.Schmutz Schaller, {\em Geometry of Riemann surfaces based on 
closed geodesics},
 Bull. Amer. Math. Soc. (N.S.) 35 (1998), no. 3, 193--214.
 

 


 


\bibitem[Th]{th}W.Thurston, {\em  A spine for Teichm\"uller space}, preprint, 1985.

\bibitem[Tom]{tom}T.tom Dieck,  {\em Transformation groups}. 
de Gruyter Studies in Mathematics, 8. Walter de Gruyter \& Co., Berlin, 1987. x+312 pp. 


\bibitem[Wo1]{wo1}S.Wolpert, {\em 
Geodesic length functions and the Nielsen problem}, J. Differential Geom. 25 
(1987), no. 2, 275--296.


\bibitem[Wo2]{wo2}S.Wolpert, {\em Geometry of the Weil-Petersson completion of Teichm\"uller
 space}, in {\em Surveys in differential geometry}, Vol. VI I I, pp. 357--393, 
 Surv. Differ. Geom., VI I I, Int. Press, 2003. 
 
 
 \bibitem[Wo3]{wo3}S.Wolpert,
  {\em  Behavior of geodesic-length functions on Teichm\"uller space}, 
J. Differential Geom. 79 (2008), no. 2, 277--334.


\end{thebibliography}
\end{document}